\newtheorem{definition}{Definition}[section]
\newtheorem{theorem}[definition]{Theorem}
\newtheorem{lemma}[definition]{Lemma}
\theoremstyle{definition}
\newtheorem{remark}[definition]{Remark}
\newtheorem{example}[definition]{Example}
\newtheorem{assumption}[definition]{Assumption}
\newtheorem{alg}[definition]{Algorithm}
\numberwithin{equation}{section}
\newcommand{\cl}{\operatorname{cl}}
\newcommand{\interior}{\operatorname{int}}
\newcommand{\relint}{\operatorname{relint}}
\newcommand{\conv}{\operatorname{conv}}
\newcommand{\cone}{\operatorname{cone}}
\newcommand{\diam}{\operatorname{diam}}
\newcommand{\trans}[1]{#1^{\mathsf{T}}}
\newcommand{\N}{\mathbb{N}}
\newcommand{\R}{\mathbb{R}}
\newcommand{\1}{\mathbbm{1}}
\newcommand{\X}{\mathbb{X}}
\newcommand{\cY}{\mathcal{Y}}
\newcommand{\cA}{\mathcal{A}}
\begin{document}
\title{Approximations of unbounded convex projections and unbounded convex sets}
\author{Gabriela Kováčová\thanks{University of California, Los Angeles, Department of Mathematics, Los Angeles, CA 90095, USA, kovacova@ucla.edu.} \and Birgit Rudloff\thanks{Vienna University of Economics and Business, Institute for Statistics and Mathematics, Vienna A-1020, AUT, birgit.rudloff@wu.ac.at.} 
}
\maketitle

\begin{abstract}
We consider the problem of projecting a convex set onto a subspace or, equivalently formulated,
the problem of computing a set obtained by applying a linear mapping to a convex feasible set. This includes the problem of approximating convex sets by polyhedrons. The existing literature on convex projections provides methods for bounded convex sets only, in this paper we propose a method that can handle both bounded and unbounded problems. The algorithms we propose build on the ideas of inner and outer approximation. In particular, we adapt the recently proposed methods for solving unbounded convex vector optimization problems to handle also the class of projection problems.
\end{abstract}

\section{Introduction}

In this work, we consider a convex projection problem with the focus on the unbounded case. Our interest in the unbounded  convex projection problem arises from the appearance of potentially unbounded convex projections in the implementation of the set-valued Bellman principle. The set-valued Bellman principle is a counterpart of the famous Bellman principle for non-standard problems, such as problems with multiple objectives, see e.g.~\cite{KR21}, ratios of two objectives~\cite{KRC20} or set-valued objectives, see e.g.~\cite{FR17,FRZ21}. Convex projections have also been applied to solve reachable sets of control systems, see~\cite{SZC18}.

The aim is to compute (or approximate) a set obtained by projecting elements of a convex feasible set. The problem formulation appearing in~\cite{KR21b,SZC18} is to
\begin{align}
\label{CP0}
\text{compute } \{ y \in \R^k \; \vert \;  \exists z \in \R^m : (z,y) \in S \},
\end{align}
for a given convex feasible set $S \subseteq \R^{m+k}$. Note that the problem is equivalent to computing (or approximating) a convex set, see Lemma~\ref{lemma_formulation}. 

Problem~\eqref{CP0} with a polyhedral feasible set $S$ is known as a polyhedral projection and was studied in~\cite{LW16}. The authors of~\cite{LW16} proved an equivalence between polyhedral projection, multi-objective linear programming and vector linear programming. As a consequence, a polyhedral projection~\eqref{CP0} can be solved via an associated multi-objective linear program
\begin{align}
\label{MOCP}
\text{minimize } \begin{pmatrix}
y \\ - \trans{\1} y
\end{pmatrix} \text{ w.r.t. } \leq_{\R^{k+1}_+} \text{ subject to } (z,y) \in S.
\end{align}
This inspired subsequent research also for the convex case. \cite{SZC18} considers a convex problem~\eqref{CP0} with a compact feasible set and proposes an algorithm to solve the problem. Their algorithm adapts the primal algorithm of~\cite{LRU14} applied to the multi-objective problem~\eqref{MOCP}. \cite{KR21b} directly examines the relation between convex problems~\eqref{CP0} and~\eqref{MOCP}. The equivalence between boundedness properties of the two problems is shown and their corresponding solutions are related in the bounded (and self-bounded) case, see Section~\ref{sec:prob} for precise definitions. A polyhedral approximation of a convex body through the associated multi-objective problem is considered in~\cite{LZS21}. The authors derive error bounds in dependence of the stopping criterion of the algorithm for the multi-objective problem.

The aim of this paper is to provide a solution method for the unbounded case, which has not been treated in the existing literature so far. We start, however, by revisiting the bounded case. The existing methods solve a bounded convex projection, implicitly or explicitly, through the associated multi-objective problem.  In consequence, computations are done in the image space of the multi-objective problem~\eqref{MOCP} which has a higher dimension than the image space of the projection problem~\eqref{CP0}. We propose an algorithm for solving a bounded projection problem working directly in the image space of the problem. The algorithm is based on an inner and outer approximation of the target set; these ideas have been used for solving convex vector optimization problems, see e.g.~\cite{LRU14,AUU22}, as well as for approximation of convex bodies, see e.g.~\cite{K92,B08}. The proposed algorithm can be seen as an alternative to existing methods for the bounded case that does not require an increase of the dimension. It is also used as a part of the proposed solution method in the unbounded case.

In case of unbounded convex projections, also the associated multi-objective problem is unbounded. A method for solving unbounded multi-objective convex problems has only recently been introduced in~\cite{WURKH22}. In this paper we adapt the ideas of the algorithms of~\cite{WURKH22} and propose a method for solving unbounded convex projections. The problem of approximating convex sets by polyhedrons was recently studied in~\cite{D22, DL23}, were algorithms were developed for the particular case of spectrahedra.

\section{Preliminaries}
For a set $X \subseteq \R^n$ we denote the closure of $X$, the interior of $X$, the relative interior of $X$, the convex hull of $X$ and the convex conic hull of $X$ by $\cl X, \interior X, \relint X, \conv X$ and $\cone X$, respectively. We use the convention $\cone \emptyset := \{0\}$. The \textit{recession cone} of a set $X \subseteq \R^n$ is
$$X_\infty = \{ d\in \R^n \; : \; x + \lambda d \in X \quad \forall x \in X, \lambda \geq 0 \}.$$
Elements of the recession cone are called \textit{recession directions}. A cone $X \subseteq \R^n$ is called solid if $\interior X \neq \emptyset$.

 Every \textit{polyhedron} has an \textit{H-representation} and a \textit{V-representation}. In the \textit{H-representation} a polyhedron $X \subseteq \R^n$ is given as a finite intersection of half-spaces
$$X = \bigcap\limits_{i=1}^k \{ x \in \R^n \; : \; \trans{w}_i x \leq \alpha_i \},$$
where $k \in \N, w_1 , \dots, w_k \in \R^n \setminus \{0\}$ and $\alpha_1, \dots, \alpha_k \in \R$. In the \textit{V-representation} a polyhedron $X \subseteq \R^n$ is determined by points $v_1, \dots, v_{k_v} \in \R^n$ and directions $d_1, \dots, d_{k_d} \in \R^n \setminus \{0\}$, for $k_v \in \N \setminus \{0\}$ and $k_d \in \N$,  as
$$X = \conv \{v_1, \dots, v_{k_v}\} + \cone \{ d_1, \dots, d_{k_d} \}.$$
We assume that the set of points as well as the set of directions do not contain any redundant elements. In the sequel, we denote by $\text{vert} X$ the set of points $v_1, \dots, v_{k_v} \in \R^n$ determining the V-representation of $X$. Note that when the polyhedron does not contain any line, the points $\text{vert} X$ are the vertices of $X$ and the V-representation is uniquely determined. For a polyhedron containing a line we consider an arbitrary (non-redundant) V-representation. 

We primarily use the $l_1$ (Manhattan) norm, denoted by $\Vert \cdot \Vert := \Vert \cdot \Vert_1$, to measure distances. When the $l_2$ (Euclidean) norm is used it is denoted by $\Vert \cdot \Vert_2$. A closed $\epsilon$-ball around the origin (in the $l_1$-norm) is denoted by $B_{\epsilon}$. The main reason for the choice of the $l_1$-norm is the fact (utilized within Algorithm~\ref{alg_rec}) that the associated ball is a polyhedron.
  The distance between two sets $X, Y \subseteq \R^n$ is measured as their \textit{Hausdorff distance}
$$d_H (X, Y) = \max \left\lbrace \sup\limits_{x \in X} \inf\limits_{y \in Y} \Vert x - y \Vert, \quad \sup\limits_{y \in Y} \inf\limits_{x \in X} \Vert x - y \Vert \right\rbrace,$$
where we use the convention $d_H(\emptyset, \emptyset) := 0$.
Note that $d_H (X, Y) \leq \epsilon$ for closed sets $X$ and $Y$ implies $X \subseteq Y + B_\epsilon$ and $Y \subseteq X + B_\epsilon$. The \textit{diameter} of a set $X \subseteq \R^n$ is defined as 
$$\diam (X) = \sup \{ \Vert x-y \Vert \; : \; x, y \in X \}.$$

\section{Problem formulation and solution concepts}\label{sec:prob} 
Throughout the paper we will use the problem formulation of computing a set obtained by applying a linear mapping onto a convex feasible set. This is an equivalent formulation of the convex projection problem, as Lemma~\ref{lemma_formulation} below shows. The formulation was chosen as it requires fewer pieces of notation. This section introduces the problem, its properties and appropriate solution concepts.

We will denote by $\X \subseteq \R^n$ a convex feasible set and by $A \in \R^{a \times n}$ a matrix. The problem of interest is
\begin{align}
\label{CP}
\tag{P}
\text{compute } A[\X] = \{ Ax \; : \; x \in \X\}.
\end{align}
Since the set $A[\X]$ does not need to be closed, even if the feasible set $\X$ is closed, we will throughout the paper work with the closed image set
\begin{align*}
\cA = \cl A[\X].
\end{align*}

\begin{lemma}
\label{lemma_formulation}
Problem~\eqref{CP} and problem~\eqref{CP0} are equivalent.
\end{lemma}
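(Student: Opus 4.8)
The plan is to prove the equivalence by exhibiting a mutual reduction: I would show that every instance of~\eqref{CP0} can be recast as an instance of~\eqref{CP}, and conversely. Since both problems ask to compute a set of a prescribed form, ``equivalent'' is understood to mean that the two families of computable sets coincide; hence it suffices to realize each problem as a special case of the other.

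First I would show that~\eqref{CP0} is a special case of~\eqref{CP}. Given a convex set $S \subseteq \R^{m+k}$, I set $\X := S$ (so $n = m+k$) and let $A := \begin{pmatrix} 0 & I_k \end{pmatrix} \in \R^{k \times (m+k)}$ be the coordinate projection $(z,y) \mapsto y$, i.e.\ here $a = k$. Then
$$A[\X] = \{ A(z,y) \; : \; (z,y) \in S \} = \{ y \in \R^k \; : \; \exists z \in \R^m,\ (z,y) \in S \},$$
which is precisely the set to be computed in~\eqref{CP0}. No convexity check is needed in this direction, since $\X = S$ is convex by assumption.

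Conversely, I would show that~\eqref{CP} is a special case of~\eqref{CP0}. Given a convex set $\X \subseteq \R^n$ and $A \in \R^{a \times n}$, I put $m := n$, $k := a$, and define the graph
$$S := \{ (x, Ax) \; : \; x \in \X \} \subseteq \R^{n+a}.$$
Projecting $S$ onto its last $a$ coordinates then yields $\{ y \; : \; \exists x \in \R^n,\ (x,y) \in S \} = \{ Ax \; : \; x \in \X \} = A[\X]$, as required. The one point needing verification is that $S$ is convex: this holds because $\X$ is convex and the map $x \mapsto (x, Ax)$ is affine, so it sends convex combinations of points of $\X$ to convex combinations of points of $S$.

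The argument is essentially a bookkeeping exercise, and I expect no serious obstacle. The only step calling for genuine (if brief) justification is the convexity of the graph $S$ in the second reduction; everything else amounts to unwinding the definitions and matching dimensions. Moreover, in both reductions the constructed set is \emph{equal} to, not merely an approximation of, the target set, so the equivalence is compatible with taking closures and the closed image set $\cA$ used in the sequel is unaffected by the choice of formulation.
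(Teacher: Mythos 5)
Your proposal is correct and follows exactly the same route as the paper's proof: reduce~\eqref{CP0} to~\eqref{CP} via $\X := S$, $A := (0_{k\times m}, I_{k\times k})$, and reduce~\eqref{CP} to~\eqref{CP0} via the graph $S := \{(x,Ax) : x\in\X\}$. The only difference is that you spell out the convexity of the graph and the equality of the resulting sets, which the paper leaves implicit.
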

\begin{proof}
Problem~\eqref{CP0} can be formulated in the form of problem~\eqref{CP} for the particular choice of $a = k, n = m+k, \X = S$ and $A = (0_{k \times m}, I_{k \times k})$.
Problem~\eqref{CP} can be formulated in the form of problem~\eqref{CP0} for the particular choice of $m=n, k = a$ and $S = \{(x,y) \in \R^{n+a} \; \vert \; x \in \X, y = Ax\}$.
\end{proof}

The following summarizes the assumptions made throughout this paper.
\begin{assumption}
\label{Ass}
 $\X \subseteq \R^n$ is a convex set with a non-empty interior, $\interior \X \neq \emptyset$.
 $A \in \R^{a \times n}$ is a matrix. They jointly determine the problem~\eqref{CP}.

\end{assumption}

The boundedness properties of problem~\eqref{CP} determine which solution concepts and solution methods are appropriate, as one can already see in~\cite{KR21b, SZC18} and is also discussed below. 
We now provide a definition of a bounded problem and shortly discuss afterwards various notions of the more general concept of self-boundedness that one can encounter in the literature. Then, we recall the solution concept for bounded problems and propose a solution concept for the unbounded case.
\begin{definition}
Problem~\eqref{CP} is \textbf{bounded} if $\cA$ is a bounded set. Otherwise, it is \textbf{unbounded}. 
\end{definition}

Consider the following notions of \textit{self-boundedness} for a closed convex set $\cA \subseteq \R^a$:
\begin{enumerate}
\item[(S1)] $\cA \subseteq \{a^0\} + \cA_{\infty}$ for some $a^0 \in \R^a$.
\item[(S2)] $\cA \subseteq \conv \{a^1, \dots, a^k\} + \cA_{\infty}$ for some $k \in \N, a^1, \dots, a^k \in \R^a$.
\item[(S3)] $\sup\limits_{x \in \cA} \inf\limits_{y \in \cA_{\infty}} \Vert x - y \Vert < \infty$.
\end{enumerate}
The notion (S1) was introduced in~\cite{U18} for (an upper image of) a vector optimization problem. (S2) was proposed in~\cite{KR21b} as a generalization of (S1) appropriate for a convex projection problem. (S3) appears in~\cite{D22} as a notion appropriate for general (line-free) convex sets, the concept originates from~\cite{NR95}. These three properties are closely related, as the next lemma summarizes.
\begin{lemma}
Consider a closed convex set $\cA \subseteq \R^a$.
It holds $(S1) \Rightarrow (S2) \Leftrightarrow (S3)$. If the recession cone $\cA_{\infty}$ is solid, then all three notions are equivalent.
\end{lemma}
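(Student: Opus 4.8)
The plan is to establish $(S1)\Rightarrow(S2)\Rightarrow(S3)\Rightarrow(S2)$ and then, under solidity, close the loop with $(S2)\Rightarrow(S1)$. The implication $(S1)\Rightarrow(S2)$ is immediate: taking $k=1$ and $a^1=a^0$ turns $\{a^0\}+\cA_{\infty}$ into $\conv\{a^1\}+\cA_{\infty}$, so (S2) holds verbatim. For $(S2)\Rightarrow(S3)$ I would use that the polytope $P:=\conv\{a^1,\dots,a^k\}$ is bounded: for every $x\in\cA$ we can write $x=p+y$ with $p\in P$ and $y\in\cA_{\infty}$, so that $\inf_{y'\in\cA_{\infty}}\Vert x-y'\Vert\le\Vert x-y\Vert=\Vert p\Vert\le\max_{i}\Vert a^i\Vert$, the last bound holding because the convex function $\Vert\cdot\Vert$ attains its maximum over a convex hull at a generating point. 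Taking the supremum over $x\in\cA$ gives (S3).

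The first genuinely geometric step is $(S3)\Rightarrow(S2)$, and here I would exploit the choice of the $l_1$-norm. Writing $M$ for the finite supremum in (S3), I would first argue that (S3) is equivalent to the inclusion $\cA\subseteq\cA_{\infty}+B_M$: for each $x\in\cA$ the distance $\inf_{y\in\cA_{\infty}}\Vert x-y\Vert\le M$ is attained, since $0\in\cA_{\infty}$, $\cA_{\infty}$ is closed and $\Vert\cdot\Vert$ is coercive, so $x=y+(x-y)$ with $y\in\cA_{\infty}$ and $x-y\in B_M$. Because $B_M$ is the $l_1$-ball, it is the polytope $\conv\{\pm M e_1,\dots,\pm M e_a\}$; setting $a^1,\dots,a^{2a}$ to be these $2a$ vertices yields $\conv\{a^1,\dots,a^{2a}\}=B_M$ and hence $\cA\subseteq\conv\{a^1,\dots,a^{2a}\}+\cA_{\infty}$, which is exactly (S2). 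Any norm would work, since a bounded ball always sits inside some polytope, but the $l_1$-ball is itself a polytope and keeps the argument clean.

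The remaining and, I expect, hardest implication is $(S2)\Rightarrow(S1)$ under the assumption that $\cA_{\infty}$ is solid; the crux is an absorption argument showing that the bounded set $P=\conv\{a^1,\dots,a^k\}$ fits inside a single translate of the cone. I would pick $d\in\interior\cA_{\infty}$ and $r>0$ with $d+B_r\subseteq\cA_{\infty}$, and choose $R$ with $P\subseteq B_R$. For $\lambda\ge R/r$ and any $p\in P$ one has $p+\lambda d=\lambda\bigl(d+p/\lambda\bigr)\in\lambda(d+B_r)\subseteq\cA_{\infty}$, using that $\cA_{\infty}$ is a cone; hence $P\subseteq\{-\lambda d\}+\cA_{\infty}$. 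Setting $a^0:=-\lambda d$ and using $\cA_{\infty}+\cA_{\infty}=\cA_{\infty}$ (valid because $\cA_{\infty}$ is a convex cone) gives $\cA\subseteq P+\cA_{\infty}\subseteq\{a^0\}+\cA_{\infty}+\cA_{\infty}=\{a^0\}+\cA_{\infty}$, i.e.\ (S1). The main obstacle is precisely this last step: without solidity a bounded set need not be absorbable into a translate of $\cA_{\infty}$ (for instance when $\cA_{\infty}$ is lower-dimensional, as for $\cA=[0,1]\times\R_{\ge 0}$, whose recession cone is a single ray), which is why (S1) is strictly stronger than (S2)/(S3) in general and becomes equivalent only when $\interior\cA_{\infty}\neq\emptyset$.
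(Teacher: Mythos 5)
Your argument is correct, and for the chain $(S1)\Rightarrow(S2)\Rightarrow(S3)\Rightarrow(S2)$ it is essentially the paper's own proof: the same decomposition $x=p+y$ with $\Vert p\Vert\le\max_i\Vert a^i\Vert$ for $(S2)\Rightarrow(S3)$, and the same observation that $(S3)$ yields $\cA\subseteq\cA_{\infty}+B_M$ with the $l_1$-ball $B_M$ a polytope for $(S3)\Rightarrow(S2)$. Two points where you go beyond the paper: first, you justify that the infimum $\inf_{y\in\cA_{\infty}}\Vert x-y\Vert$ is attained (closedness of the recession cone of a closed convex set plus coercivity of the norm), which is needed to pass from ``$\inf\le M$'' to ``$x-y\in B_M$ for some $y$'' and is left implicit in the paper. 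Second, and more substantially, for the solid case the paper simply cites the literature for $(S1)\Leftrightarrow(S3)$, whereas you give a self-contained absorption argument: a ball $d+B_r\subseteq\interior\cA_{\infty}$ swallows the bounded polytope $P$ after scaling by $\lambda\ge R/r$, so $P\subseteq\{-\lambda d\}+\cA_{\infty}$ and $\cA\subseteq\{-\lambda d\}+\cA_{\infty}$ using $\cA_{\infty}+\cA_{\infty}=\cA_{\infty}$. This is correct and arguably preferable to the citation, and your example $\cA=[0,1]\times\R_{\ge 0}$ cleanly shows why solidity cannot be dropped.
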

\begin{proof}
$(S1) \Rightarrow (S2)$ is obvious. 
Let us show $(S2) \Leftrightarrow (S3)$:
\begin{itemize}
\item Assume $(S2)$ and take any $x \in \cA$. Then $x = \sum \alpha^i a^i + y$ for some $y \in \cA_{\infty}$ and $\alpha^i \geq 0$ with $\sum \alpha^i = 1$. We get $\Vert x - y \Vert = \Vert \sum \alpha^i a^i \Vert \leq \max\limits_{i = 1, \dots, k} \Vert a^i \Vert$, which proves $(S3)$.
\item Assume $(S3)$ and set $K:= \sup\limits_{x \in \cA} \inf\limits_{y \in \cA_{\infty}} \Vert x - y \Vert < \infty$. This means that for every $x \in \cA$ there exists $y \in \cA_{\infty}$ such that $x-y \in B_K$, therefore $\cA \subseteq \cA_{\infty} + B_K$. Since the polyhedral $\ell_1$ ball $B_K$ is a convex hull of finitely many points, this proves $(S2)$.   
\end{itemize}
\cite{D22} showed $(S1) \Leftrightarrow (S3)$ for a set with a solid $\cA_{\infty}$. 
\end{proof}

Finally, we discuss solution concepts for the considered problem. What we understand under \textit{solving~\eqref{CP}} is approximating the set $\cA$ (and hence also $A[\X]$) in some (appropriate) sense. The following definition of a solution that is appropriate for a bounded problem is taken from~\cite{KR21b}.
\begin{definition}
A non-empty finite set $\bar{X}\subseteq \X$ is a \textbf{finite $\epsilon$-solution} of a bounded problem~\eqref{CP} if it holds 
$$\conv A[\bar{X}] + B_{\epsilon} \supseteq \cA.$$
\end{definition}
\cite{KR21b} shows that such a finite $\epsilon$-solution exists for an arbitrary tolerance $\epsilon > 0$ for a bounded convex projection problem. An approach for finding such solutions through solving the associated multi-objective problems is also provided -- we shortly recapped this in the introduction. 
However, this solution concept is clearly not appropriate once we drop the assumption of boundedness -- an unbounded set $\cA$ cannot be reasonably approximated by a bounded polyhedron $\conv A[\bar{X}] + B_{\epsilon}$. This is true even for unbounded problems that are self-bounded.

For this reason, we propose a more general notion of approximate solution here that is appropriate for bounded as well as unbounded problems (regardless of being self-bounded or not). It is based on the notion of a finite $(\epsilon, \delta)$-solution of a convex vector optimization problem considered in~\cite{WURKH22}. The main idea is for the solution to contain, alongside a set of feasible points, also a set of directions to approximate the recession cone of the image set. 

\begin{definition}
\label{def_sol_unbound}
A pair $(\bar{X}, \cY)$ is a \textbf{finite $(\epsilon, \delta)$-solution} of problem~\eqref{CP} if  $\emptyset \neq \bar{X} \subseteq \X$ and $\cY \subseteq \R^a$ are finite sets that satisfy
\begin{enumerate}
\item $d_H (\cA_{\infty} \cap B_1, \cone \cY \cap B_1) \leq \delta$,
\item $\cA_{\infty} \subseteq \cone \cY$, and
\item $\conv A[\bar{X}] + \cone \cY + B_{\epsilon} \supseteq \cA$.
\end{enumerate}
\end{definition}

The above notion of a finite $(\epsilon, \delta)$-solution (as well as the counterpart in~\cite{WURKH22} for convex vector optimization problems) is closely related to the notion of a (polyhedral) outer approximation of a set introduced in~\cite{D22}. We state this definition here as well with one modification -- throughout this work we use the Manhattan norm instead of the Euclidean norm.
\begin{definition}{\cite[Definition 4.2]{D22}}
\label{def_outer_app}
Given a nonempty, closed convex and line-free set $C \subseteq R^a$, a line-free polyhedron $P$ is called an $(\epsilon, \delta)$-outer approximation of $C$ if
\begin{enumerate}
\item $d_H (P_{\infty} \cap B_1, C_{\infty} \cap B_1) \leq \delta$,
\item $P \supseteq C$,
\item $\sup\limits_{p \in \text{vert }P} \inf\limits_{c \in C} \Vert p - c \Vert \leq \epsilon$. 
\end{enumerate}
\end{definition}

The following relation holds between Definitions~\ref{def_sol_unbound} and~\ref{def_outer_app}: 
A finite $(\epsilon, \delta)$-solution $(\bar{X}, \cY)$ of~\eqref{CP} generates a polyhedron $\conv A[\bar{X}] + \cone \cY + B_{\epsilon}$ that is an $(\epsilon, \delta)$-outer approximation of the set $\cA$. For a bounded problem, a finite $\epsilon$-solution $\bar{X}$ yields a finite $(\epsilon, 0)$-solution $(\bar{X}, \emptyset)$ in the sense of Definition~\ref{def_sol_unbound} and it also generates a polyhedron $\conv A[\bar{X}] + B_{\epsilon}$ that is an $(\epsilon, 0)$-outer approximation of the bounded set $\cA$.

\section{Scalarizations of~(\ref{CP})}
The previous works provide a way to solve a (bounded) problem~\eqref{CP} through the associated multi-objective problem. These approaches, such as~\cite[Algorithm 3.3]{SZC18}, work with scalarizations of the associated multi-objective problem.
In this work we will construct algorithms to solve the problem~\eqref{CP} directly, without (implicitly or explicitly) transforming it into a multi-objective problem. Therefore, we will need scalarizations of problem~\eqref{CP}. This sections introduces three scalarizations -- the weighted sum, the Pascoletti-Serafini and the norm minimization -- and provides some related results. Parallel results for scalarizations of a convex vector optimization problem can be found in~\cite{LRU14, AUU22}.

The \textbf{weighted sum scalarization} of~\eqref{CP}, given a weight $w \in\R^a\setminus\{0\}$, is
\begin{align}
\tag{$P_1(w)$}
\label{P1}
\min\limits_{x \in \X} \trans{w} Ax.
\end{align}
\begin{lemma} \label{lemma_P1}
If~\eqref{P1} is bounded and $x^w$ is the optimal solution, then $\{ y \in \R^a \; : \; \trans{w} y \geq \trans{w} Ax^w \} \supseteq \mathcal{A}$.
\end{lemma}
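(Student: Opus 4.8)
The plan is to use the optimality of $x^w$ to place all of $A[\X]$ inside the half-space, and then invoke the closedness of that half-space to extend the inclusion to the closure $\cA$. No additional structure of $\X$ or $A$ is needed beyond the definition of the optimal value.

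First I would observe that, since $x^w$ solves the bounded problem~\eqref{P1}, its objective value $\trans{w} A x^w = \min_{x \in \X} \trans{w} A x$ is a lower bound for the objective over the feasible set; that is, $\trans{w} A x \geq \trans{w} A x^w$ holds for every $x \in \X$. Rewriting this in terms of image points, every $y = Ax \in A[\X]$ satisfies $\trans{w} y \geq \trans{w} A x^w$, which is precisely the statement that $A[\X] \subseteq H$, where $H := \{ y \in \R^a \; : \; \trans{w} y \geq \trans{w} A x^w \}$.

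Next I would note that $H$ is a closed half-space, being the preimage of the closed interval $[\trans{w} A x^w, \infty)$ under the continuous linear functional $y \mapsto \trans{w} y$. Since the closure operation is monotone and $H$ is closed, taking the closure of the inclusion $A[\X] \subseteq H$ yields $\cA = \cl A[\X] \subseteq \cl H = H$, which is the desired claim.

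There is no genuine obstacle here: the argument is a direct consequence of the definition of the optimal value together with the fact that half-spaces are closed. The only point worth flagging explicitly is that the passage from $A[\X]$ to its closure $\cA$ really does rely on the closedness of $H$; without it one could only conclude the inclusion for $A[\X]$ itself, not for the closed image set $\cA$ that the algorithms actually target.
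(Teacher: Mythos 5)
Your argument is correct and is exactly the (omitted-as-immediate) reasoning the paper intends: optimality of $x^w$ places $A[\X]$ in the half-space, and closedness of the half-space carries the inclusion over to $\cA = \cl A[\X]$. Your explicit remark about why the closure step needs the half-space to be closed is a fair point, but there is no substantive difference from the paper's approach.
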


The \textbf{Pascoletti-Serafini scalarization} of~\eqref{CP} with point $v \in \R^a$ and direction $d \in \R^a \setminus \{0\}$ is
\begin{align}
\tag{$P_2(v, d)$}
\label{P2}
\max\limits_{(x,\alpha) \in \X\times\R} \alpha  \quad \text{ s.t. } \quad Ax = v + \alpha d.
\end{align}

Its dual problem is
\begin{align}
\tag{$D_2(v, d)$}
\label{D2}
\min\limits_{\lambda \in \R^{a}} \left\lbrace -\trans{\lambda}v + \sup_{x \in \X} \{\trans{\lambda} Ax\}\right\rbrace \quad \text{ s.t. } \quad \trans{\lambda} d = 1.
\end{align}

\begin{remark}
\label{rem:P2-duality}
Strong duality between \eqref{P2} and its dual \eqref{D2} does not hold in general. As a counterexample one can consider $\X = \{ x \in \R^2 \vert x_1 > 0, x_2 \geq 0 \} \cup \{ x \in \R^2 \vert x_1 = 0, 0 \leq x_2 \leq 1 \}$, the matrix $A$ being the identity, the point $v = (0,0)$ and the direction $d = (0,1)$. Then, the primal problem \eqref{P2} is bounded with optimal value $1$, but the dual problem \eqref{D2} is unbounded. In general, the existence of a pair $(x,\alpha)$ strictly feasible for the primal problem is sufficient for strong duality, see~\cite[Section 5.2.3]{BV04}. Therefore, strong duality holds when the point $v$ is chosen from the relative interior of the set $A[\X]$.
\end{remark}

\begin{lemma}\label{lemma_P2_hyperplane}
Let $v \in \relint A[\X]$ and let problem~\eqref{P2} be bounded. Assume that $(x^*, \alpha^*)$ and $\lambda^*$ are optimal solutions of~\eqref{P2} and~\eqref{D2}, respectively. Then it holds $\{ y \in \R^a \; : \; \trans{(\lambda^*)} y \leq \trans{(\lambda^*)} A x^* \} \supseteq \mathcal{A} \supseteq A[\X]$.
\end{lemma}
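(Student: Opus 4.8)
The plan is to reduce the nontrivial inclusion to a single supremum identity and then extract that identity from the absence of a duality gap. First I would dispose of the easy inclusion $\cA \supseteq A[\X]$, which is immediate from the definition $\cA = \cl A[\X]$. For the remaining inclusion, set $H := \{ y \in \R^a : \trans{(\lambda^*)} y \leq \trans{(\lambda^*)} A x^* \}$. Since $H$ is a closed half-space and $\cA$ is the closure of $A[\X]$, it suffices to show $A[\X] \subseteq H$, i.e.\ that $\trans{(\lambda^*)} Ax \leq \trans{(\lambda^*)} A x^*$ for every $x \in \X$; taking the closure then yields $\cA \subseteq H$. Equivalently, the whole task comes down to proving $\sup_{x \in \X} \trans{(\lambda^*)} Ax = \trans{(\lambda^*)} A x^*$.

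Next I would compute the value $\trans{(\lambda^*)} A x^*$ from primal feasibility. Since $(x^*, \alpha^*)$ is feasible for~\eqref{P2}, we have $Ax^* = v + \alpha^* d$, and applying $\trans{(\lambda^*)}$ together with the dual feasibility constraint $\trans{(\lambda^*)} d = 1$ from~\eqref{D2} gives $\trans{(\lambda^*)} A x^* = \trans{(\lambda^*)} v + \alpha^*$.

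The crux is to invoke strong duality. Because~\eqref{P2} is bounded and there is no duality gap between~\eqref{P2} and~\eqref{D2}, the primal optimal value $\alpha^*$ equals the dual optimal value, namely $-\trans{(\lambda^*)} v + \sup_{x \in \X} \{\trans{(\lambda^*)} Ax\}$. Setting these two expressions for $\alpha^*$ equal and substituting $\alpha^* = \trans{(\lambda^*)} A x^* - \trans{(\lambda^*)} v$ from the previous step, the terms $\trans{(\lambda^*)} v$ cancel and I arrive at exactly $\sup_{x \in \X} \{\trans{(\lambda^*)} Ax\} = \trans{(\lambda^*)} A x^*$, which is the identity sought above.

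The step requiring the most care is the bookkeeping of which optimality and feasibility relations feed into the argument: strong duality supplies the equality of optimal values, primal feasibility supplies the geometric relation $Ax^* = v + \alpha^* d$, and dual feasibility supplies $\trans{(\lambda^*)} d = 1$. There is no genuine analytic obstacle here—once these three facts are combined the result falls out—but one must ensure that boundedness of~\eqref{P2} guarantees both that $\alpha^*$ is finite and that the supremum in~\eqref{D2} is finite, so that the equality of optimal values is meaningful and the supremum is in fact attained by $x^*$.
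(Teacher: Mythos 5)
Your proposal is correct and follows essentially the same route as the paper: combine strong duality (equality of optimal values), primal feasibility $Ax^* = v + \alpha^* d$, and dual feasibility $\trans{(\lambda^*)}d = 1$ to obtain $\sup_{x \in \X}\trans{(\lambda^*)}Ax = \trans{(\lambda^*)}Ax^*$. Your additional remark that the inclusion extends from $A[\X]$ to $\cA$ because the half-space is closed is a detail the paper leaves implicit, but the argument is the same.
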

\begin{proof}
Strong duality and feasibility give
\begin{align*}
\sup_{x \in \X} \{\trans{(\lambda^*)} Ax\} = \trans{(\lambda^*)} v + \alpha^* = \trans{(\lambda^*)} (Ax^* - \alpha^* d) + \alpha^* = \trans{(\lambda^*)} Ax^*.
\end{align*}
This implies $\trans{(\lambda^*)} Ax \leq \trans{(\lambda^*)} Ax^*$ for all $x \in \X$ and the result follows.
\end{proof}

\begin{lemma}\label{lemma_P2_rec}
If the problem~\eqref{P2} is unbounded, then $d$ is a recession direction of $\mathcal{A}$.
\end{lemma}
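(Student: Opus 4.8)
The plan is to exploit the two structural facts about the target set: $\mathcal{A} = \cl A[\X]$ is \emph{closed} by definition, and it is \emph{convex} as the closure of the convex image $A[\X]$ of the convex set $\X$. Unboundedness of~\eqref{P2} means its optimal value is $+\infty$, so the problem is in particular feasible and there is a sequence $(x_k, \alpha_k) \in \X \times \R$ with $A x_k = v + \alpha_k d$ and $\alpha_k \to +\infty$. Setting $p_k := A x_k = v + \alpha_k d$, each $p_k$ lies in $A[\X] \subseteq \mathcal{A}$. The goal is then to upgrade the crude information ``$\mathcal{A}$ contains points arbitrarily far out along the ray $v + \alpha d$'' into the statement $d \in \mathcal{A}_{\infty}$, i.e. $y + \lambda d \in \mathcal{A}$ for \emph{every} $y \in \mathcal{A}$ and \emph{every} $\lambda \geq 0$.

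The key step I would carry out is a direct convex-combination-and-limit argument. Fix an arbitrary $y \in \mathcal{A}$ and $\lambda \geq 0$ (the case $\lambda = 0$ being trivial). Since $\alpha_k \to +\infty$, for all large $k$ the weight $t_k := \lambda / \alpha_k$ lies in $[0,1]$. By convexity of $\mathcal{A}$, the point $(1 - t_k) y + t_k p_k$ belongs to $\mathcal{A}$, and the chosen weight is engineered so that the growing coordinate cancels the divergence:
\begin{align*}
(1 - t_k) y + t_k p_k = y + t_k (p_k - y) = y + t_k (v - y) + t_k \alpha_k\, d = y + t_k (v - y) + \lambda d \longrightarrow y + \lambda d,
\end{align*}
because $t_k \to 0$. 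As $\mathcal{A}$ is closed, the limit $y + \lambda d$ lies in $\mathcal{A}$. Since $y$ and $\lambda$ were arbitrary, this yields $d \in \mathcal{A}_{\infty}$, as required.

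The step I expect to require the most care is precisely this transport of the recession direction from the specific ray $\{v + \alpha d\}$ to an arbitrary base point $y \in \mathcal{A}$: unboundedness of the scalarization only delivers points along a single half-line, and it is the combination of convexity (to form the segments from $y$ toward the far-out $p_k$) together with closedness (to pass to the limit) that does the real work. An alternative, slightly more abstract route would be to first observe that the segments from $p_1$ to $p_k$ fill out the half-line $\{v + \beta d : \beta \geq \alpha_1\} \subseteq \mathcal{A}$, and then invoke the standard fact that for a nonempty closed convex set the presence of a single half-line in direction $d$ already forces $d$ into the recession cone; I prefer the self-contained limiting computation above since it avoids citing that external result and uses only the definition of $\mathcal{A}_\infty$ given in the preliminaries.
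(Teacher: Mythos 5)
Your proof is correct, and it takes a genuinely different (though closely related) route from the paper's. The paper first invokes the standard reduction for nonempty closed convex sets: to show $d \in \mathcal{A}_\infty$ it suffices to exhibit a \emph{single} base point $v^0 \in \mathcal{A}$ with $v^0 + \alpha d \in \mathcal{A}$ for all $\alpha \geq 0$; it then produces that half-line inside $A[\X]$ itself by taking convex combinations of the feasible points $v + \alpha^0 d$ and $v + \alpha^1 d$. You instead prove the membership $y + \lambda d \in \mathcal{A}$ directly from the definition of the recession cone given in the preliminaries, for an \emph{arbitrary} base point $y \in \mathcal{A}$, via the convex combination $(1-t_k)y + t_k p_k$ with $t_k = \lambda/\alpha_k$ and a passage to the limit using closedness of $\mathcal{A}$. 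In effect you re-prove, inline, the very reduction the paper cites as known — which is exactly what you flag in your closing paragraph. What the paper's approach buys is brevity (the half-line argument stays entirely inside $A[\X]$ and only needs convexity of that set, with closedness absorbed into the cited fact); what yours buys is self-containedness, since it uses nothing beyond the definitions of $\mathcal{A}_\infty$, convexity, and closedness stated in the paper. The only points worth making explicit in a polished write-up are that $t_k \in [0,1]$ requires $\alpha_k \geq \lambda$ and $\alpha_k > 0$, which hold for all large $k$ since $\alpha_k \to +\infty$, and that convexity of $\mathcal{A} = \cl A[\X]$ (as the closure of a convex set) is what licenses forming $(1-t_k)y + t_k p_k$ when $y$ need not lie in $A[\X]$ — both of which your argument already implicitly handles.
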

\begin{proof}
Since $\mathcal{A}$ is a closed convex set, to prove that $d \in \mathcal{A}_{\infty}$ it suffices to show that for some $v^0 \in \cA$ it holds
$$v^0 + \alpha d \in \mathcal{A} \quad \forall \alpha \geq 0.$$

Let us first find a point $v^0 \in \cA$: Unboundedness of the problem~\eqref{P2} implies that there is a pair of $x^0 \in \X$ and $\alpha^0 \geq 0$ feasible for the problem, so $v^0 :=Ax^0= v + \alpha^0 d \in A[\X] \subseteq \mathcal{A}$. 

Now let $\alpha \geq 0$. Since~\eqref{P2} is unbounded, there exists a feasible $\alpha^1 \geq \alpha^0 + \alpha$ with a corresponding $x^1 \in \X$, which gives us $v + \alpha^1 d = Ax^1 \in A[\X]$. Since $v^0 + (\alpha^1 - \alpha^0) d = v + \alpha^1 d$, convexity of the set $A[\X]$ implies $v^0 + \alpha d \in A[\X] \subseteq \mathcal{A}$.
\end{proof}

\begin{lemma} \label{lem:P2-feasible}
If $v \in A[\X]$, then~\eqref{P2} is feasible. 
\end{lemma}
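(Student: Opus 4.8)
The plan is to establish feasibility of~\eqref{P2} by exhibiting an explicit feasible pair. Recall that feasibility of~\eqref{P2} means the existence of a pair $(x,\alpha) \in \X \times \R$ satisfying the linear equality constraint $Ax = v + \alpha d$. So I only need to produce one such pair.

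Since $v \in A[\X]$, by the very definition $A[\X] = \{ Ax : x \in \X \}$ there exists a preimage point $x^0 \in \X$ with $Ax^0 = v$. I would then simply choose $\alpha = 0$. With this choice the constraint reads $Ax^0 = v + 0 \cdot d = v$, which holds by construction, and clearly $(x^0, 0) \in \X \times \R$. Hence $(x^0, 0)$ is a feasible point and~\eqref{P2} is feasible.

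There is essentially no obstacle in this argument; the one thing worth noting is that the direction $d$ plays no role whatsoever, because setting $\alpha = 0$ annihilates the term $\alpha d$. The statement is therefore immediate from the fact that membership $v \in A[\X]$ guarantees a preimage in the feasible set $\X$.
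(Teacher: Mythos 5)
Your proof is correct and matches the argument the paper itself relies on: the lemma is stated without proof there, but the same pair $(x^0,0)$ with $Ax^0=v$ is used explicitly in the proof of Lemma 6.3 to establish feasibility of the Pascoletti--Serafini scalarization. Nothing is missing.
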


The \textbf{norm minimization scalarization} of~\eqref{CP} given a point $v \in \R^a$ is
\begin{align}
\tag{$P_3(v)$}
\label{P3}
\min\limits_{(x,z) \in\X\times \R^a} \Vert z \Vert_{2} \quad \text{ s.t. } \quad Ax = v+z.
\end{align}
Its dual problem is
\begin{align}
\tag{$D_3(v)$}
\label{D3}
\max\limits_{\lambda \in \R^a}  \left\lbrace - \trans{\lambda} v + \inf\limits_{x \in \X} \trans{\lambda} Ax \right\rbrace \quad \text{ s.t. } \quad \Vert \lambda \Vert_{2} \leq 1.
\end{align}
To derive this consider the dual function
\begin{align*}
q(\lambda) = \inf\limits_{x \in \X, z \in \R^a} L(x, z, \lambda) = -\trans{\lambda} v + \inf\limits_{x \in \X} \trans{\lambda} Ax + \inf\limits_{z \in \R^a} \{ \Vert z \Vert_{2} - \trans{\lambda} z \},
\end{align*}
where the last term corresponds to the conjugate of $\Vert z \Vert_{2}$, see~\cite[Example 3.26]{BV04}.

The Norm minimization scalarization was used by~\cite{AUU22} in an algorithm for solving convex vector optimization problems. Unlike in the rest of the paper, we used the $\ell_2$ norm here. Strong duality, boundedness and existence of a supporting hyperplane derived below hold for the scalarization with all choices of a norm. Using the $\ell_2$ norm helps to avoid possible problems in the case of non-uniqueness of an optimal solution under the $\ell_1$ norm, even though in numerical experiments both norms worked equally well. The scalarization will be utilized in Algorithm~\ref{alg_bound}.

\begin{lemma} \label{lemma_P3_fb}
Problem~\eqref{P3} is feasible and bounded for any point $v \in \R^a$. Furthermore, there is no duality gap between problems~\eqref{P3} and~\eqref{D3}.
\end{lemma}
\begin{proof}
Consider an arbitrary $\bar{x} \in \interior \; \X$ and the corresponding $\bar{z} = A \bar{x} - v$. The pair $(\bar{x}, \bar{z})$ is strictly feasible for the problem. The objective function is bounded from below by $0$. This yields feasibility, boundedness and strong duality.
\end{proof}

\begin{lemma} \label{lemma_P3}
Assume that $(x^*, z^*)$ and $\lambda^*$ are optimal solutions of~\eqref{P3} and~\eqref{D3}, respectively. Then it holds $\{ y \in \R^a \; : \; \trans{(\lambda^*)} y \geq \trans{(\lambda^*)} Ax^* \} \supseteq \mathcal{A} \supseteq A[\X]$.
\end{lemma}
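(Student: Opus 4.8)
The plan is to mirror the proof of Lemma~\ref{lemma_P2_hyperplane}, exploiting strong duality between~\eqref{P3} and~\eqref{D3} to show that the optimal dual variable $\lambda^*$ furnishes a supporting halfspace for $\cA$. The key observation is that the constraint of~\eqref{D3} forces $\|\lambda^*\|_2 \le 1$, so $\lambda^* \neq 0$ is not guaranteed a priori; dealing with the degenerate case $\lambda^* = 0$ will be the one subtlety to handle, since then the claimed halfspace is all of $\R^a$ and the inclusion holds trivially. I will therefore proceed assuming we have extracted the correct variational inequality from optimality, and then note that the stated inclusion is vacuously true when $\lambda^* = 0$.

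First I would write down the strong duality identity. By the previous lemma there is no duality gap, so the optimal value of~\eqref{P3} equals the optimal value of~\eqref{D3}. Feasibility of $(x^*, z^*)$ gives $Ax^* = v + z^*$, hence $\|z^*\|_2 = \|Ax^* - v\|_2$, and this common optimal value equals
$$-\trans{(\lambda^*)} v + \inf\limits_{x \in \X} \trans{(\lambda^*)} Ax.$$
The goal is to convert this equality into a statement locating $\cA$ on one side of the hyperplane $\{y : \trans{(\lambda^*)} y = \trans{(\lambda^*)} Ax^*\}$.

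The central step is to argue that
$$\inf\limits_{x \in \X} \trans{(\lambda^*)} Ax = \trans{(\lambda^*)} Ax^*,$$
i.e.\ that $x^*$ attains the infimum defining the dual objective at $\lambda^*$. This should follow from complementary slackness / the first-order optimality conditions: the optimal primal-dual pair satisfies that $\lambda^*$ is (up to the norm constraint) a subgradient direction of $\|\cdot\|_2$ at $z^*$, and the KKT stationarity in $x$ forces $x^*$ to minimize $\trans{(\lambda^*)} Ax$ over $\X$. Once this identity holds, it immediately yields $\trans{(\lambda^*)} Ax \ge \trans{(\lambda^*)} Ax^*$ for all $x \in \X$, so $A[\X]$, and by continuity its closure $\cA$, lies in the halfspace $\{y : \trans{(\lambda^*)} y \ge \trans{(\lambda^*)} Ax^*\}$; the inclusion $\cA \supseteq A[\X]$ is by definition of $\cA = \cl A[\X]$.

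I expect the main obstacle to be establishing the attainment identity $\inf_{x \in \X} \trans{(\lambda^*)} Ax = \trans{(\lambda^*)} Ax^*$ cleanly, rather than merely $\ge$ from weak duality. The cleanest route is to plug the feasible relation $Ax^* = v + z^*$ into the strong-duality equation and isolate $\inf_{x \in \X} \trans{(\lambda^*)} Ax = \|z^*\|_2 - \trans{(\lambda^*)} z^* + \trans{(\lambda^*)} Ax^*$, then invoke the Cauchy--Schwarz bound $\trans{(\lambda^*)} z^* \le \|\lambda^*\|_2 \|z^*\|_2 \le \|z^*\|_2$ to conclude $\|z^*\|_2 - \trans{(\lambda^*)} z^* \ge 0$, giving $\inf_{x \in \X} \trans{(\lambda^*)} Ax \ge \trans{(\lambda^*)} Ax^*$; combined with the trivial reverse inequality (since $x^* \in \X$) this forces equality, completing the argument exactly parallel to Lemma~\ref{lemma_P2_hyperplane}.
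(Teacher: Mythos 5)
Your proposal is correct and follows essentially the same route as the paper: substitute the primal feasibility relation $Ax^* = v + z^*$ into the strong-duality identity and use dual feasibility $\Vert \lambda^* \Vert_2 \leq 1$ to conclude $\Vert z^* \Vert_2 - \trans{(\lambda^*)} z^* \geq 0$, hence $\trans{(\lambda^*)} Ax^* \leq \inf_{x \in \X} \trans{(\lambda^*)} Ax$. The only cosmetic difference is that you invoke Cauchy--Schwarz directly where the paper phrases the same inequality via the Fenchel conjugate of the norm, i.e.\ $\inf_{z} \{ \Vert z \Vert_2 - \trans{(\lambda^*)} z \} = 0$; your KKT/complementary-slackness detour in the middle paragraph is superseded by the clean argument you give at the end.
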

\begin{proof}
From strong duality we have
\begin{align*}
\Vert z^* \Vert_{2} = - \trans{(\lambda^*)} v + \inf_{x \in \X} \trans{(\lambda^*)} Ax = - \trans{(\lambda^*)} (Ax^* - z^*) + \inf_{x \in \X} \trans{(\lambda^*)} Ax.
\end{align*}
We rearrange this expression and add the term $0 = \inf\limits_{z \in \R^a} \{ \Vert z \Vert_{2} - \trans{(\lambda^*)} z \}$, which originates from the Fenchel conjugate of the norm and dual feasibility, see~\cite[Example 3.26]{BV04}. We obtain
\begin{align*}
\trans{(\lambda^*)} Ax^* - \inf_{x \in \X} \trans{(\lambda^*)} Ax = \inf\limits_{z \in \R^a} \{ \Vert z \Vert_{2} - \trans{(\lambda^*)} z \} - (\Vert z^* \Vert_{2} - \trans{(\lambda^*)} z^*) \leq 0.
\end{align*}
Therefore, for all $x \in \X$ it holds $\trans{(\lambda^*)} Ax^* \leq \trans{(\lambda^*)} Ax$.
\end{proof}

\begin{remark}
\label{remark_near1}
Above we derived half-spaces containing the set $\cA$ based on optimal solutions of the three scalarizations and their duals. However, an optimal solution need not be available -- it might not even exist. In practice, these scalarizations will be solved by some available solver up to the computational precision of the solver. Let us restate the results of Lemmas~\ref{lemma_P1},~\ref{lemma_P2_hyperplane} and~\ref{lemma_P3} for near-optimal solutions. Here $\varepsilon$ denotes the precision.

Firstly, consider a near-optimal solution $x^\varepsilon$ of the weighted sum scalarization~\eqref{P1}. One can show  that it holds
\begin{align*}
\cA \subseteq \{ y \in \R^a \; : \; \trans{w} y \geq \trans{w} Ax^\varepsilon - \varepsilon \}.
\end{align*}

Secondly, consider near-optimal solutions $(x^\varepsilon, \alpha^\varepsilon)$ and $\lambda^\varepsilon$ of the Pascoletti-Serafini scalarization~\eqref{P2} and its dual~\eqref{D2}. One can show  that it holds
\begin{align*}
\cA \subseteq \{ y \in \R^a \; : \; \trans{(\lambda^\varepsilon)} y \leq \trans{(\lambda^\varepsilon)} A x^\varepsilon + \varepsilon \}.
\end{align*}

Thirdly, consider near-optimal solutions $(x^\varepsilon, z^\varepsilon)$ and $\lambda^\varepsilon$ of the norm-minimization scalarization~\eqref{P3} and its dual~\eqref{D3}. One can show that it holds
\begin{align*}
\cA \subseteq \{ y \in \R^a \; : \; \trans{(\lambda^\varepsilon)} y \geq \trans{(\lambda^\varepsilon)} A x^\varepsilon - \varepsilon \}.
\end{align*}
Hence, the halfspaces are slightly shifted if only near-optimal solutions of the scalarizations are computed. They are, however, not tilted. This will in particular be important as the recession directions searched within the algorithm proposed in Section~\ref{sec:unb} are not affected by the use of optimal or near-optimal solutions. For more details on the effect of near optimal solutions on the proposed algorithms see Remarks~\ref{remark_near2} and~\ref{Rem:near}. 
\end{remark}

\section{Algorithm for a bounded problem~(\ref{CP})}
First, we address the case of a bounded problem~\eqref{CP}. Such problems can be solved via the associated multi-objective problem by the already available algorithm \cite[Algorithm 3.3]{SZC18} or results of~\cite{KR21b}. However, doing so runs an algorithm in a $(a+1)$-dimensional space. Here we propose an algorithm that works directly in the space $\R^a$. Furthermore, it will be used as part of the algorithm for unbounded problems proposed in Section~\ref{sec:unb}.

Our algorithm is based on the idea of finding an outer approximation of the set and iteratively improving the approximation till a desired tolerance level is achieved. As such, Algorithm~\ref{alg_bound} is similar to outer approximation schemes for convex bodies, see~\cite{B08} for a survey, and to the (primal) algorithms of~\cite{LRU14,AUU22} for bounded convex vector optimization problems (CVOPs).  Unlike a CVOP, problem~\eqref{CP} does not involve an ordering cone. Therefore, to obtain an initial outer approximation of the set $\cA$, we use the weighted sum scalarizations for standard basis vectors as well as their negatives. This will provide a box-type initial approximation. Afterwards we improve the approximation by applying the norm minimizing scalarization on the vertices of the outer approximation.

\begin{alg}{\label{alg_bound}}
Algorithm for solving a bounded problem~\eqref{CP} satisfying Assumption~\ref{Ass}.
\begin{description}
\item[Input] ~\\
Bounded problem~\eqref{CP} with a feasible set $\X$ and a matrix $A$; tolerance $\epsilon > 0$. 

\item[Initialization] ~		
\begin{enumerate}
	\item Set $\bar{X} := \emptyset, \mathcal{A}_0 := \R^a$, $V^\epsilon := \emptyset$ and $H := \emptyset$
	\item For $w = e^1, \dots, e^a, -e^1, \dots, -e^a$ solve~\ref{P1} and obtain an optimal solution $x^w$.
	\begin{itemize}
	\item Set $\bar{X} := \bar{X} \cup \{x^w\}$ and
	$\mathcal{A}_0 := \mathcal{A}_0 \cap \{ y \in \R^a \; : \; \trans{w} y \geq \trans{w} Ax^w \}.$
	\end{itemize}	  
\end{enumerate}

\item[Iteration] ~	
\begin{enumerate}\addtocounter{enumi}{2}
	\item While $H \neq \R^a$
	\begin{enumerate}
	\item Set $H := \R^a$
	\item For each $v \in (\text{vert } \mathcal{A}_0) \setminus V^\epsilon$
	\begin{itemize}
	\item Solve problems~\ref{P3} and~\ref{D3} to obtain optimal solutions $(x^v, z^v)$ and  $\lambda^v$. 
	\item Update $\bar{X} := \bar{X} \cup \{x^v\}$
	\item If $\Vert z^v \Vert > \epsilon$, then update $H :=  H \cap \{ y \in \R^a \; : \; \trans{(\lambda^v)} y \geq \trans{(\lambda^v)} Ax^v  \}$. \\ Else $V^\epsilon := V^\epsilon \cup \{v\}$.
	\end{itemize}
	\item Update $\mathcal{A}_0 := \mathcal{A}_0 \cap H$
	\end{enumerate}
\end{enumerate}

\item[Output] ~\\
		Set of feasible points $\bar{X}$ and outer approximation $\cA_0$ of $\cA$.
\end{description}
\end{alg}

\begin{remark}
\label{remark_initialization}
Alternatively, we could initialize the algorithm with solving the weighted sum scalarization~\eqref{P1} for $w = e^1, \dots, e^a, -\1$ in Step 2. This would also provide a bounded set $\cA_0$. We compare the two initializations in Example~\ref{ex1}, where the alternative version requires fewer operations (optimizations and evaluations of a polyhedron). This is, however, not true for all problems.
\end{remark}

In the following we prove some results about the behaviour of Algorithm~\ref{alg_bound}. These will be afterwards used to prove the correctness of the proposed algorithm.

\begin{lemma}\label{lemma_alg_bound}
Let the problem~\eqref{CP} be (feasible and) bounded. The following holds for Algorithm~\ref{alg_bound}:
\begin{enumerate}
\item Each scalar problem~\eqref{P1} considered in Step 2 is bounded.
\item After Initialization (Steps 1 and 2), the set $\mathcal{A}_0$ is a bounded convex polyhedron that contains the set $\mathcal{A}$.
\item At each iteration, the set $\mathcal{A}_0$ is a (bounded) convex polyhedral superset of $\mathcal{A}$.
\end{enumerate}
\end{lemma}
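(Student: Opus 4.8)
The plan is to prove the three claims of Lemma~\ref{lemma_alg_bound} essentially in the order stated, since each builds on the previous one. The overarching idea is that boundedness of $\cA$ guarantees that every weighted sum scalarization used in the Initialization attains a finite optimum, and that the resulting finitely many supporting half-spaces (for the $2a$ directions $\pm e^1, \dots, \pm e^a$) already trap $\cA$ inside a box; the iteration then only intersects with further valid half-spaces, so the superset property is preserved.

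For the first bullet, I would argue as follows. By assumption $\cA = \cl A[\X]$ is bounded, so for any weight $w$ the linear functional $y \mapsto \trans{w} y$ is bounded below on $\cA$, hence on $A[\X]$. Since $\inf_{x \in \X} \trans{w} Ax = \inf_{y \in A[\X]} \trans{w} y > -\infty$, the scalarization~\eqref{P1} is bounded for every $w$, in particular for the $2a$ basis directions and their negatives considered in Step~2. (Here I would note that boundedness of the value is what the lemma calls ``bounded''; I would not dwell on whether the infimum is attained, since the algorithm statement assumes an optimal solution $x^w$ is obtained.) For the second bullet, I would invoke Lemma~\ref{lemma_P1}: each optimal $x^w$ yields the half-space $\{ y : \trans{w} y \geq \trans{w} Ax^w \} \supseteq \cA$. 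Intersecting $\R^a$ with these $2a$ half-spaces gives $\cA_0$, which is therefore a convex polyhedron containing $\cA$. To see it is \emph{bounded}, I would observe that using $w = \pm e^j$ produces, for each coordinate $j$, both a lower bound $e^j{}^{\mathsf{T}} y \geq e^j{}^{\mathsf{T}} A x^{e^j}$ and an upper bound $(-e^j){}^{\mathsf{T}} y \geq (-e^j){}^{\mathsf{T}} A x^{-e^j}$, i.e. $y_j$ is confined to a bounded interval; since this holds in every coordinate, $\cA_0$ is contained in a box and hence bounded.

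For the third bullet, I would proceed by induction over the iterations of the while-loop in Step~3. The base case is the post-Initialization $\cA_0$ just handled. For the inductive step, suppose at the start of an iteration $\cA_0$ is a bounded convex polyhedron with $\cA_0 \supseteq \cA$. In the inner loop, for each vertex $v \in \text{vert}\,\cA_0$ we solve~\eqref{P3} and~\eqref{D3}; by Lemma~\ref{lemma_P3} the optimal pair $(x^v, z^v), \lambda^v$ yields a valid half-space $\{ y : \trans{(\lambda^v)} y \geq \trans{(\lambda^v)} Ax^v \} \supseteq \cA$. The update sets $\cA_0 := \cA_0 \cap H$, where $H$ is an intersection of finitely many such half-spaces (possibly $H = \R^a$ if none were added). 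Since each half-space contains $\cA$ and $\cA_0 \supseteq \cA$ by hypothesis, the intersection still contains $\cA$; it remains a convex polyhedron (finite intersection of half-spaces), and it stays bounded because it is a subset of the already-bounded $\cA_0$. This closes the induction.

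I expect the main obstacle to be conceptual rather than technical: namely, being careful about what ``bounded'' means for the scalarization~\eqref{P1} versus attainment of the optimum, and ensuring the boundedness of $\cA_0$ is genuinely established (not merely its being a polyhedron containing the bounded set $\cA$, which alone does \emph{not} force $\cA_0$ bounded). The key insight making boundedness work is precisely the choice of the $2a$ axis directions $\pm e^j$ in the Initialization, which pin down each coordinate from both sides; this is why I would single out that coordinate-wise trapping argument rather than appealing to any softer containment statement. The rest is routine: validity of each cutting half-space is already packaged in Lemmas~\ref{lemma_P1} and~\ref{lemma_P3}, and the polyhedral/superset properties are preserved under finite intersection essentially by definition.
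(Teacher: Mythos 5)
Your proposal is correct and follows essentially the same route as the paper's proof: boundedness of $\cA$ gives boundedness of each weighted sum scalarization, the $\pm e^j$ half-spaces trap $\cA_0$ in a coordinate box, and the iteration only intersects with further half-spaces containing $\cA$ (Lemmas~\ref{lemma_P1} and~\ref{lemma_P3}). The only cosmetic difference is that the paper phrases the box argument as ``$\cA_0$ is either empty or bounded, and it cannot be empty since it contains $\cA$,'' while you argue the coordinate-wise trapping directly; both are equivalent.
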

\begin{proof}
\begin{enumerate}
\item For a bounded problem~\eqref{CP} it holds $A[\X] \subseteq B_K$, respectively $\mathcal{A} \subseteq B_K$, for some $K$. This implies that scalarization~\eqref{P1} is bounded  for all weights $w \in \R^a \setminus \{0\}$.
\item The set $\mathcal{A}_0$ is initialized as $\R^a$ and throughout Step 2 it is updated via intersections of halfspaces, which contain the set $\mathcal{A}$, see Lemma~\ref{lemma_P1}. Therefore, $\mathcal{A}_0$ is a convex polyhedral superset of $\mathcal{A}$. Since the set $\mathcal{A}_0$ consists of intersections of halfspaces of the form $\{ y \in \R^a \; : \;  y_i \geq \overline{\alpha}_i \}$ and $\{ y \in \R^a \; : \;  y_i \leq \underline{\alpha}_i \}$ for some $\overline{\alpha}_i,\underline{\alpha}_i\in\R$ and for $i = 1, \dots, a$, it is bounded. As it contains the nonempty set $\mathcal{A}$, it cannot be empty. 
\item Within each iteration, set $\mathcal{A}_0$ is updated through intersections with half-spaces that contain the set $\mathcal{A}$, see Lemmas~\ref{lemma_P3_fb} and~\ref{lemma_P3}.
\end{enumerate}
\end{proof}

\begin{theorem}
If Algorithm~\ref{alg_bound} terminated successfully for a bounded problem~\eqref{CP}, then $\bar{X}$ is a finite $\epsilon$-solution of~\eqref{CP}. Furthermore, $\conv A[\bar{X}] + B_{\epsilon}$ (as well as $\mathcal{A}_0$) is an $(\epsilon, 0)$-outer approximation of the set $\mathcal{A}$.
\end{theorem}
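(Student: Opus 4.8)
The plan is to establish two things: that the returned set $\bar{X}$ satisfies the defining inclusion $\conv A[\bar{X}] + B_\epsilon \supseteq \cA$ of a finite $\epsilon$-solution, and that the polyhedron $\cA_0$ (hence also $\conv A[\bar X] + B_\epsilon$) is an $(\epsilon,0)$-outer approximation in the sense of Definition~\ref{def_outer_app}. The hypothesis is that the algorithm \emph{terminated successfully}, meaning the while-loop exited, which happens precisely when $H = \R^a$ at the end of an iteration. By construction, $H$ remains $\R^a$ only if no vertex $v \in \text{vert } \cA_0$ produced a separating halfspace, i.e. every vertex satisfied $\Vert z^v \Vert \leq \epsilon$. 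This is the key structural fact I would extract first.

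First I would record that by Lemma~\ref{lemma_alg_bound} the set $\cA_0$ is at every stage a bounded convex polyhedron with $\cA \subseteq \cA_0$; in particular $\cA_0$ is bounded so its recession cone is $\{0\}$, and condition~1 of Definition~\ref{def_outer_app} holds trivially with $\delta = 0$ (both $P_\infty$ and $C_\infty$ reduce to $\{0\}$), as does line-freeness. Condition~2, $\cA_0 \supseteq \cA$, is exactly the superset property from Lemma~\ref{lemma_alg_bound}. For condition~3 I would use the termination criterion: for each vertex $v \in \text{vert } \cA_0$, the norm minimization scalarization~\eqref{P3} computed the distance (in $\ell_2$) from $v$ to $A[\X]$ via the optimal value $\Vert z^v \Vert_2$, where $z^v = Ax^v - v$ with $x^v \in \X$. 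Termination guarantees $\Vert z^v \Vert \leq \epsilon$ for every vertex, and since $Ax^v \in A[\X] \subseteq \cA$, this gives $\inf_{c \in \cA} \Vert v - c \Vert \leq \Vert v - Ax^v \Vert \leq \epsilon$ for each vertex $v$, which is precisely condition~3.

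Next I would derive the finite $\epsilon$-solution inclusion $\conv A[\bar X] + B_\epsilon \supseteq \cA$. The geometric idea is that $\cA_0$ is a bounded polyhedron each of whose vertices lies within $\epsilon$ of the point $Ax^v \in \conv A[\bar X]$ (since each such $x^v$ was added to $\bar X$). Because a bounded polyhedron is the convex hull of its vertices, $\cA_0 = \conv(\text{vert } \cA_0) \subseteq \conv(A[\bar X] + B_\epsilon) = \conv A[\bar X] + B_\epsilon$, using that $B_\epsilon$ is convex and the Minkowski sum of convex sets is convex. Combining with $\cA \subseteq \cA_0$ yields the desired inclusion. Finally, the claim that $\conv A[\bar X] + B_\epsilon$ is an $(\epsilon,0)$-outer approximation follows from the general remark after Definition~\ref{def_outer_app}: a finite $\epsilon$-solution generates precisely such an approximation of the bounded set $\cA$.

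The main obstacle I anticipate is the bookkeeping in the last step: one must be careful that the vertices appearing in the \emph{final} $\cA_0$ are exactly those processed in the last iteration, so that each final vertex $v$ genuinely has its associated $Ax^v$ in $\conv A[\bar X]$ with $\Vert v - Ax^v \Vert \leq \epsilon$. Since $\bar X$ only grows and every vertex of the final $\cA_0$ was scalarized in the final (successful) pass without triggering a cut, this alignment holds, but it is the point requiring the most care. A secondary subtlety is the switch between $\ell_2$ and $\ell_1$ norms: the scalarization uses $\Vert \cdot \Vert_2$ while the approximation guarantees are stated in $\Vert \cdot \Vert_1$; I would note that the stopping test and the $B_\epsilon$-ball must be interpreted consistently, and since the code tests $\Vert z^v \Vert$ (the $\ell_1$ norm by the paper's convention) while solving the $\ell_2$ scalarization, one uses $\Vert z^v \Vert_1 \leq \epsilon$ directly to conclude membership in the $\ell_1$-ball $B_\epsilon$.
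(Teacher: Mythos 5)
Your proposal is correct and follows essentially the same route as the paper: invoke Lemma~\ref{lemma_alg_bound} for $\cA \subseteq \cA_0$ with $\cA_0$ a bounded polyhedron, use the termination condition to place every vertex of the final $\cA_0$ within $\epsilon$ of some $A\bar{x}$, and conclude $\cA_0 \subseteq \conv A[\bar{X}] + B_\epsilon$ by convexity. Your extra care about the final-pass bookkeeping and the explicit verification of the conditions of Definition~\ref{def_outer_app} only makes the argument more detailed than the paper's, not different in substance.
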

\begin{proof}
Assume that Algorithm~\ref{alg_bound} terminated successfully for a bounded problem~\eqref{CP}. According to Lemma~\ref{lemma_alg_bound} and convexity of $\mathcal{A}$ it holds
$$\conv A[\bar{X}] \subseteq \mathcal{A} \subseteq \mathcal{A}_0.$$
The algorithm terminates when
\begin{align*}
\forall v \in \text{vert }\mathcal{A}_0 \quad \exists \bar{x} \in \bar{X}:  \quad \Vert v - A\bar{x} \Vert \leq \epsilon.
\end{align*}
Convexity of the involved polyhedrons and convexity of the norm give
\begin{align*}
d_H (\mathcal{A}_0, \conv A[\bar{X}]) \leq \epsilon,
\end{align*}
which implies 
$$\conv A[\bar{X}] \subseteq \mathcal{A} \subseteq \mathcal{A}_0 \subseteq \conv A[\bar{X}] + B_{\epsilon}.$$
\end{proof}

\begin{remark}
\label{remark_near2}
Within Algorithm~\ref{alg_bound} we assume that the optimal solutions of the two scalarizations are found. As stated in Remark~\ref{remark_near1}, this will not be the case when the scalarizations are solved numerically (even if optimal solutions exist). Instead, near-optimal solutions will be found, up to a solver accuracy $\varepsilon$. This will results in the cutting half-spaces being shifted (however not tilted) by $\varepsilon$, see Remark~\ref{remark_near1}. One could incorporate this
quantity into the implementation of the algorithm, but usually $\varepsilon \ll \epsilon$.
\end{remark}

\begin{remark}
The correctness of Algorithm~\ref{alg_bound} is proven under the assumption that it terminates, similar to the results for the Benson-type algorithms \cite[Algorithm 1]{LRU14} and \cite[Algorithm 1]{AUU22} for bounded CVOPs. Finite termination of all these algorithms remains an open question. The authors of \cite{AUU22} proved termination in a finite number of iteration for a modification of \cite[Algorithm 2]{AUU22}. 
If a guarantee of a finite termination is desired for a bounded convex projection, we could apply \cite[Algorithm 2]{AUU22} to a counterpart of problem~\eqref{MOCP}. However, this would come at the cost of solving the problem in $(a+1)$-dimensional space and a difficulty in extending the approach to the unbounded case.
\end{remark}

\section{Algorithm for a general problem~(\ref{CP})}
\label{sec:unb}
Now we address problem~\eqref{CP} without the assumption of boundedness. Our aim is again to formulate an algorithm for solving this problem -- that is, for finding a finite $(\epsilon, \delta)$-solution $(\bar{X}, \cY)$ of~\eqref{CP}. Our approach is related in spirit to the method for solving a (possibly unbounded) convex vector optimization problem from~\cite{WURKH22}. 

Below we introduce two algorithms: Algorithm~\ref{alg_rec} searches for directions $\cY$ that generate an outer approximation of the recession cone $\cA_{\infty}$ up to a desired tolerance $\delta$. Algorithm~\ref{alg_unbound} combines Algorithm~\ref{alg_rec} (as an initialization phase) with the iterations of Algorithm~\ref{alg_bound} to also find the set of feasible points $\bar{X}$, which together with $\cY$ provide a finite $(\epsilon, \delta)$-solution of~\eqref{CP}. 

Algorithm~\ref{alg_rec} is based on~\cite[Algorithm 5.2]{WURKH22}. However, unlike a CVOP our problem does not contain an ordering cone, therefore no recession direction is known in advance. After initialization, our algorithm iteratively visits directions from the recession cone of the current outer approximation while searching for a recession direction of the set $\cA$. If a recession direction is found, it serves as an initial inner approximation of $\cA_{\infty}$, afterwards the inner approximation can be used to iteratively improve  the outer one. However, a recession direction need not be found in finitely many steps (think of $\cA_{\infty}$ that is not solid). Then the algorithm terminates once the outer approximation becomes 'thin' enough. In both cases, a finite $(\epsilon, \delta)$-solution is found.

\begin{alg}{\label{alg_rec}}
Algorithm for approximating the recession cone of $\cA$. 
\begin{description}
\item[Input] ~\\
Feasible set $\X$ and matrix $A$; tolerance $\delta > 0$, point $v \in \relint A[\X]$. 

\item[Initialization] ~		
\begin{enumerate}
	\item Set $\bar{X} := \emptyset, \mathcal{A}_0 := \R^a, \mathcal{Y}_{in} := \emptyset, \Delta := \emptyset$ 
	\item Check boundedness: \\ For $w = e^1, \dots, e^a, -e^1, \dots, -e^a$ solve problem~\ref{P1}
	\begin{itemize}
	\item If~\ref{P1} is bounded and $x^w$ is an optimal solution, then update $\bar{X} := \bar{X} \cup \{x^w\}$ and
	$\mathcal{A}_0 := \mathcal{A}_0 \cap \{ y \in \R^a \; : \; \trans{w} y \geq \trans{w} Ax^w \}.$
	\end{itemize}		
	\item Compute $\mathcal{Y}_{out} := \text{vert } ((\mathcal{A}_0)_{\infty} \cap B_1 ) \setminus \{0\}$.	  
\end{enumerate}

\item[Iteration] ~	
\begin{enumerate}\addtocounter{enumi}{3}
	\item Search for a recession direction: \\
	While $\mathcal{Y}_{in} = \emptyset$ and $\diam (\cY_{out}) > \delta$ 
	\begin{enumerate}
	\item Set $H := \R^a$
	\item For each $d \in \mathcal{Y}_{out} \cup \left\lbrace \frac{\sum_{d' \in \cY_{out}} d'}{\Vert \sum_{d' \in \cY_{out}} d' \Vert} \right\rbrace$ solve problem~\ref{P2}
	\begin{itemize}
	\item If~\ref{P2} is unbounded, then update $\mathcal{Y}_{in} := \mathcal{Y}_{in} \cup \{d\}$. 
	\item If $(x^d, \alpha^d)$ and $\lambda^d$ are optimal solutions of~\ref{P2} and~\ref{D2}, then update $\bar{X} := \bar{X} \cup \{x^d\}$ and $H := H \cap \{ y \in \R^a \; : \; \trans{(\lambda^d)} y \leq \trans{(\lambda^d)} A x^d \}$
	\end{itemize}
	\item Update $\mathcal{A}_0 := \mathcal{A}_0 \cap H$ and $\mathcal{Y}_{out} := \text{vert } ((\mathcal{A}_0)_{\infty} \cap B_1  ) \setminus \{0\}$
	\end{enumerate}
	
	\item While $\mathcal{Y}_{out} \setminus (\mathcal{Y}_{in} \cup \Delta) \neq \emptyset$ and $\diam (\cY_{out}) > \delta$
	\begin{enumerate}
	\item Set $H := \R^a$
	\item For each $d \in \mathcal{Y}_{out} \setminus (\mathcal{Y}_{in} \cup \Delta)$ find $r^d = \arg \min \{ \Vert d - r \Vert \; : \; r \in \mathcal{Y}_{in} \}$
	\begin{itemize}
	\item If $\Vert d - r^d \Vert \leq \delta$, then update $\Delta := \Delta \cup \{d\}$
	\item If $\Vert d - r^d \Vert > \delta$, then select $\beta \in (0,1)$ and solve problem $P_2 (v, d^{\beta})$ for $d^{\beta} = \frac{\beta d + (1-\beta) r^d}{\Vert \beta d + (1-\beta) r^d \Vert}$
		\begin{itemize}
		\item If $P_2 (v, d^{\beta})$ is unbounded, then update $\mathcal{Y}_{in} := \mathcal{Y}_{in} \cup \{d^{\beta}\}$
		\item If $(x^\beta, \alpha^\beta)$ and $\lambda^\beta$ are optimal solutions of $P_2 (v, d^{\beta})$ and $D_2 (v, d^{\beta})$, then update $\bar{X} := \bar{X} \cup \{x^\beta\}$ and $H := H \cap \{ y \in \R^a \; : \; \trans{(\lambda^\beta)} y \leq \trans{(\lambda^\beta)} A x^\beta \}$
		\end{itemize}
	\end{itemize}
	\item Update $\mathcal{A}_0 := \mathcal{A}_0 \cap H$ and $\mathcal{Y}_{out} := \text{vert } ((\mathcal{A}_0)_{\infty} \cap B_1  ) \setminus \{0\}$
	\end{enumerate}
\end{enumerate}

\item[Output] ~\\
		Set of feasible points $\bar{X}$ and outer approximation $\mathcal{A}_0$ of $\mathcal{A}$.\\ Inner and outer approximations $\mathcal{Y}_{in}$ and $\mathcal{Y}_{out}$ of $\mathcal{A}_{\infty}$.
\end{description}
\end{alg}

\begin{remark}
\label{remark_input_point}
In Algorithm~\ref{alg_rec}, the point $v$ from the relative interior of $A[\X]$ is treated as an input parameter. Such point can be constructed by performing a feasibility check by means of a phase I stage of an interior-point method, see~\cite[Section 11.4]{BV04}.
\end{remark}

The following lemma provides results that help to prove the correctness of Algorithm~\ref{alg_rec} in Theorem~\ref{thm_unbound_alg} below.

\begin{lemma}\label{lemma_alg_rec}
Assume that $v \in \relint A[\X]$. The following holds:
\begin{enumerate}
\item Each problem~\eqref{P2} solved in Algorithm~\ref{alg_rec} is feasible and strong duality holds for it.
\item In each step of Algorithm~\ref{alg_rec}, the set $\mathcal{Y}_{in}$  (if non-empty) contains only normalized recession directions of $\mathcal{A}$.
\item In each step of Algorithm~\ref{alg_rec}, the set $\mathcal{A}_0$ is a convex polyhedral superset of $\cA$.
\item If the problem~\eqref{CP} is bounded, then Algorithm~\ref{alg_rec} terminates with $\mathcal{Y}_{in} =  \mathcal{Y}_{out} =\emptyset$.
\end{enumerate}
\end{lemma}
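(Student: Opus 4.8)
The plan is to prove each of the four bullet points of Lemma~\ref{lemma_alg_rec} in turn, as they are largely independent and each follows from results already established for the scalarizations.

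\textbf{Feasibility of each \eqref{P2}.} Every Pascoletti-Serafini problem in Algorithm~\ref{alg_rec} is solved with the fixed point $v = Ax^0 \in A[\X]$ chosen in Step~1 (or provided as input via Remark~\ref{remark_input_point}). By the last unnumbered lemma of Section~4 ("If $v \in A[\X]$, then~\eqref{P2} is feasible"), feasibility is immediate: the pair $(x^0, 0)$ satisfies $Ax^0 = v + 0\cdot d$ for any direction $d$. This is the quick one.

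\textbf{$\mathcal{Y}_{in}$ contains only normalized recession directions.} I would argue that every direction added to $\mathcal{Y}_{in}$ is added precisely in the branch where the corresponding problem~\eqref{P2} is detected to be unbounded. By Lemma~\ref{lemma_P2_rec}, unboundedness of $P_2(v,d)$ forces $d \in \mathcal{A}_{\infty}$. Since every candidate direction $d$ (whether the averaged direction $d = \frac{\sum_{d'} d'}{\Vert \sum_{d'} d' \Vert}$, a vertex $d \in \mathcal{Y}_{out}$, or the interpolated $d^{\beta}$) is explicitly normalized to have unit $\ell_1$-norm before being passed to the scalarization, each element of $\mathcal{Y}_{in}$ is a normalized recession direction. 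One must simply inspect the two places in Steps~5 and~6 where $\mathcal{Y}_{in}$ is updated and confirm the normalization.

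\textbf{$\mathcal{A}_0$ is a convex polyhedral superset of $\cA$.} This parallels the second bullet of Lemma~\ref{lemma_alg_bound}. The set is initialized as $\R^a$ and only ever updated by intersecting with halfspaces. In Step~3 these are the weighted-sum halfspaces $\{y : \trans{w}y \geq \trans{w}Ax^w\}$, which contain $\cA$ by Lemma~\ref{lemma_P1}. In Steps~5 and~6 the updates come through $H$, built from Pascoletti-Serafini halfspaces $\{y : \trans{(\lambda^d)}y \leq \trans{(\lambda^d)}Ax^d\}$, which contain $\cA$ by Lemma~\ref{lemma_P2_hyperplane}. A finite intersection of such halfspaces is a convex polyhedron containing $\cA$, and this property is preserved at every iteration by an easy induction.

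\textbf{Termination with empty $\mathcal{Y}_{in}, \mathcal{Y}_{out}$ in the bounded case.} This is the bullet I expect to be the main obstacle. If~\eqref{CP} is bounded then $\cA \subseteq B_K(0)$ for some $K$, so every~\eqref{P1} in Step~3 is bounded (as in Lemma~\ref{lemma_alg_bound}), and all $2a$ halfspaces $\{y : \pm y_i \geq \mp\alpha_i\}$ are added. The resulting $\mathcal{A}_0$ is then a bounded box-type polyhedron, so $(\mathcal{A}_0)_{\infty} = \{0\}$. Consequently Step~4 computes $\mathcal{Y}_{out} = \text{vert}(\{0\} \cap B_1(0)) \setminus \{0\} = \emptyset$. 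With $\mathcal{Y}_{out} = \emptyset$, the while-loop guard in Step~5 fails immediately (its body is never entered, so $\mathcal{Y}_{in}$ stays empty), and the guard in Step~6 also fails since $\mathcal{Y}_{out}\setminus(\mathcal{Y}_{in}\cup\Delta) = \emptyset$. Hence the algorithm terminates at once with $\mathcal{Y}_{in} = \mathcal{Y}_{out} = \emptyset$. The delicate point to verify carefully is that boundedness of $\cA$ indeed makes \emph{all} $2a$ weighted-sum problems bounded (so that no recession direction survives in $(\mathcal{A}_0)_{\infty}$), and that the convention $\text{vert}(\{0\}) = \{0\}$ yields the empty set after removing the origin; these are the only spots where one could slip.
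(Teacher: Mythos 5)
Your proof is correct and follows essentially the same route as the paper's: feasibility via the pair $(x^0,0)$, membership of $\mathcal{Y}_{in}$ in $\mathcal{A}_\infty$ via Lemma~\ref{lemma_P2_rec}, the superset property via Lemmas~\ref{lemma_P1} and~\ref{lemma_P2_hyperplane}, and the bounded case via boundedness of all $2a$ weighted-sum scalarizations forcing $(\mathcal{A}_0)_\infty = \{0\}$ and hence $\mathcal{Y}_{out} = \emptyset$ so that Steps~5 and~6 are never entered. Your write-up is somewhat more detailed than the paper's (e.g.\ checking normalization of each candidate direction and spelling out why the loop guards fail), but the argument is the same.
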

\begin{proof}
\begin{enumerate}
\item This follows from Lemma~\ref{lem:P2-feasible} and Remark~\ref{rem:P2-duality}. 
\item A direction $d$ is added to the set $\mathcal{Y}_{in}$ only when the problem~\ref{P2} is unbounded, all these directions are normalized. The claim follows from Lemma~\ref{lemma_P2_rec}.
\item The set $\mathcal{A}_0$ is initialized as $\R^a$. It is updated through intersections with supporting halfspaces of $\mathcal{A}$, see Lemmas~\ref{lemma_P1} and~\ref{lemma_P2_hyperplane}.
\item For a bounded problem all weighted sum scalarizations solved in Step 2 are bounded. Therefore, the set $\cA_0$ is bounded after Step 2 and $\mathcal{Y}_{out} =\emptyset$ is obtained in Step 3. No iterations of Step 4  or Step 5 are run.
\end{enumerate}
\end{proof}

\begin{theorem}
\label{thm_rec}
If Algorithm~\ref{alg_rec} terminated successfully for problem~\eqref{CP}, then it holds
\begin{align}
\label{eq1}
\cone \mathcal{Y}_{in} \subseteq \cA_{\infty} \subseteq \cone \mathcal{Y}_{out}.
\end{align}
and
\begin{align}
\label{eq3}
d_H (\cone \mathcal{Y}_{out} \cap B_1, \cA_{\infty} \cap B_1) \leq \delta.
\end{align}
Hence, $\cone \mathcal{Y}_{out}$ is a $(0, \delta)$-outer approximation of $\cA_{\infty}$ in the sense of Definition~\ref{def_outer_app}.
\end{theorem}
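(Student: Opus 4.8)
The plan is to establish the two chain inclusions \eqref{eq1} first and then deduce the Hausdorff bound \eqref{eq3} from them together with the termination conditions. For the left inclusion $\cone \mathcal{Y}_{in} \subseteq \cA_\infty$, I would invoke the second bullet of Lemma~\ref{lemma_alg_rec}: every element of $\mathcal{Y}_{in}$ is a (normalized) recession direction of $\cA$, i.e.\ $\mathcal{Y}_{in}\subseteq\cA_\infty$, and since $\cA_\infty$ is a convex cone, the conic hull stays inside, giving $\cone\mathcal{Y}_{in}\subseteq\cA_\infty$. For the right inclusion $\cA_\infty\subseteq\cone\mathcal{Y}_{out}$, I would use that $\mathcal{A}_0$ is a closed convex polyhedral superset of $\cA$ (third bullet of Lemma~\ref{lemma_alg_rec}), so by monotonicity of recession cones $\cA_\infty\subseteq(\mathcal{A}_0)_\infty$; it then remains to identify $(\mathcal{A}_0)_\infty=\cone\mathcal{Y}_{out}$. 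This last identity is the statement that for a polyhedral cone $C$ one has $\cone(\text{vert}(C\cap B_1(0))\setminus\{0\})=C$, which holds because the nonzero vertices of the bounded polyhedron $C\cap B_1(0)$ lie exactly on the extreme rays of $C$ and therefore generate it.

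With \eqref{eq1} in hand, I turn to \eqref{eq3}. Write $X:=\cone\mathcal{Y}_{out}\cap B_1(0)$ and $Y:=\cA_\infty\cap B_1(0)$. The inclusion $\cA_\infty\subseteq\cone\mathcal{Y}_{out}$ gives $Y\subseteq X$, so the second term of the Hausdorff distance vanishes and it suffices to bound $\sup_{x\in X}\inf_{y\in Y}\|x-y\|\le\delta$. Here I would use that $X=\conv(\{0\}\cup\mathcal{Y}_{out})$ (its nonzero vertices are exactly $\mathcal{Y}_{out}$, and $0\in X\cap Y$) together with convexity of $Y$: if every generator $d\in\mathcal{Y}_{out}$ admits some $y_d\in Y$ with $\|d-y_d\|\le\delta$, then for an arbitrary convex combination $x=\sum_j\lambda_j v_j$ of vertices $v_j\in\{0\}\cup\mathcal{Y}_{out}$, choosing $y_j\in Y$ with $\|v_j-y_j\|\le\delta$ (valid since $0\in Y$), the point $y:=\sum_j\lambda_j y_j\in Y$ satisfies $\|x-y\|\le\sum_j\lambda_j\|v_j-y_j\|\le\delta$. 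Thus the whole bound reduces to the per-generator estimate $\inf_{y\in Y}\|d-y\|\le\delta$ for each $d\in\mathcal{Y}_{out}$.

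It therefore remains to verify this per-generator estimate, and here I would split according to the two ways the algorithm can stop. If the loops exited because $\mathcal{Y}_{out}\setminus(\mathcal{Y}_{in}\cup\Delta)=\emptyset$, then every $d\in\mathcal{Y}_{out}$ is either in $\mathcal{Y}_{in}$, in which case $d\in\cA_\infty\cap B_1(0)=Y$ and the distance is $0$, or in $\Delta$, in which case $d$ was added only after the check $\|d-r^d\|\le\delta$ for some $r^d\in\mathcal{Y}_{in}$; since $r^d$ is a \emph{normalized} recession direction it lies in $Y$, so the distance is $\le\delta$. If instead the loops exited because $\diam(\mathcal{Y}_{out})\le\delta$, I would first note that the problem must be unbounded (otherwise $\mathcal{Y}_{out}=\emptyset$ by the last bullet of Lemma~\ref{lemma_alg_rec} and \eqref{eq3} is trivial), pick a nonzero $d^*\in\cA_\infty$, normalize it to $\hat d^*\in\cA_\infty\cap B_1(0)=Y$ which by \eqref{eq1} also lies in $\cone\mathcal{Y}_{out}$, and bound $\|d-\hat d^*\|$ through the diameter of the normalized cone $\cone\mathcal{Y}_{out}\cap\partial B_1(0)$, which the diameter bound on the generators $\mathcal{Y}_{out}$ controls.

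I expect the main obstacle to be the diameter-termination case: one must argue that controlling $\diam(\mathcal{Y}_{out})$, a bound on pairwise distances between the \emph{generators}, actually controls the distance of arbitrary normalized elements of $\cone\mathcal{Y}_{out}$ to $\cA_\infty$. This is precisely where the $\ell_1$-geometry of $B_1(0)$ enters (the reason the Manhattan ball was chosen), since nearby generators lie on a common or adjacent facet of the ball and the normalized cone stays thin; the nonemptiness of $\cA_\infty$ supplies the anchor direction $\hat d^*$. A secondary technical point is the clean identification $(\mathcal{A}_0)_\infty=\cone\mathcal{Y}_{out}$ when the recession cone fails to be pointed, where one checks directly that the nonzero vertices of $C\cap B_1(0)$ still generate $C$.
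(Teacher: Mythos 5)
Your proposal is correct and follows essentially the same route as the paper: \eqref{eq1} via Lemma~\ref{lemma_alg_rec} together with the identity $\cone\mathcal{Y}_{out}=(\mathcal{A}_0)_\infty$, then a split over the two termination conditions, using the convex-combination argument on $\cone\mathcal{Y}_{out}\cap B_1(0)=\conv(\mathcal{Y}_{out}\cup\{0\})$ in the first case and a normalized recession direction of $\cA$ (which lies in $\conv\mathcal{Y}_{out}$, whose diameter equals $\diam(\mathcal{Y}_{out})\le\delta$) as the anchor in the second. The obstacle you anticipate in the diameter-termination case is already dissolved by your own per-generator reduction; no facet-adjacency argument about the $\ell_1$ ball is needed, only the facts that $\diam(\conv S)=\diam(S)$ for any norm and that polyhedrality of $B_1(0)$ makes $\mathcal{Y}_{out}$ a finite generating set with all nonzero vertices of unit norm.
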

\begin{proof}
If~\eqref{CP} is bounded, Algorithm~\ref{alg_rec} terminates with $\mathcal{Y}_{in} = \mathcal{Y}_{out} = \emptyset$ and the properties are satisfied by definition. Let us consider an unbounded~\eqref{CP}.
Relation~\eqref{eq1} follows from convexity, Lemma~\ref{lemma_alg_rec} and the fact that $\cone \mathcal{Y}_{out} = (\cA_0)_{\infty}$.
Algorithm~\ref{alg_rec} can terminate in two possible ways:
\begin{enumerate}
\item The set $\cY_{in}$ is non-empty (i.e.~a recession direction was found in Step 4) and it holds $\cY_{out} \subseteq \cY_{in} \cup \Delta$.
\item It holds $\diam (\cY_{out}) \leq \delta$.
\end{enumerate}
We consider these two cases separately and show that each one implies~\eqref{eq3}.
\begin{enumerate}
\item In the first case, the termination condition can be stated as
\begin{align}
\label{eq2}
\forall d \in \mathcal{Y}_{out} \quad \exists r \in \mathcal{Y}_{in} : \quad  \Vert d - r \Vert \leq \delta.
\end{align}
We prove~\eqref{eq3} by showing that~\eqref{eq2} implies 
\begin{align}
\label{eq5}
d_H (\cone \mathcal{Y}_{out} \cap B_1, \cone \mathcal{Y}_{in} \cap B_1) \leq \delta.
\end{align}
Given the way the sets are constructed, it holds $\cone \mathcal{Y}_{out} \cap B_1 = \conv (\mathcal{Y}_{out} \cup \{0\})$ and $\cone \mathcal{Y}_{in} \cap B_1 = \conv (\mathcal{Y}_{in} \cup \{0\})$. Take any $d \in \cone \mathcal{Y}_{out} \cap B_1$, then $d = \sum\limits_{i=1}^{k} \alpha^i d^i$ for some $k \in \N, d^1, \dots, d^{k} \in \mathcal{Y}_{out}$ and $\alpha^1, \dots, \alpha^{k} \geq 0$ with $\sum\limits_{i=1}^{k} \alpha^i \leq 1$. According to~\eqref{eq2}, for each $d^i$ there exists $r^i \in \mathcal{Y}_{in}$ with $\Vert d^i - r^i \Vert \leq \delta$. Convexity of the (Manhattan) norm implies $\Vert \sum\limits_{i=1}^{k} \alpha^i r^i \Vert \leq \sum\limits_{i=1}^{k} \alpha^i \Vert r^i \Vert \leq 1$ and $\Vert d - \sum\limits_{i=1}^{k} \alpha^i r^i \Vert \leq \sum\limits_{i=1}^{k} \alpha^i \Vert d^i - r^i \Vert \leq \delta$. This proves~\eqref{eq3} since $\sum\limits_{i=1}^{k} \alpha^i r^i \in \cone \mathcal{Y}_{in} \cap B_1$.

\item Let's consider the second case. 

Convexity of the norm and $\diam (\cY_{out}) \leq \delta$ imply $\diam (\conv \cY_{out}) \leq \delta$. Recall that $\cone \mathcal{Y}_{out} \cap B_1 = \conv (\mathcal{Y}_{out} \cup \{0\}) = \conv (\conv \mathcal{Y}_{out} \cup \{0\})$. Let us now prove~\eqref{eq3}. Take any $y \in \cone \mathcal{Y}_{out} \cap B_1$, then there exists $\bar{y} \in \conv \cY_{out}$ and $\alpha \in [0, 1]$ such that $y = \alpha \bar{y} + (1-\alpha)0$. Since $\cA_{\infty} \cap \conv \cY_{out} \neq \emptyset$ and $\diam (\conv \cY_{out}) \leq \delta$, there exists $\bar{a} \in \cA_{\infty} \cap \conv \cY_{out} \subseteq B_1$ such that $\Vert\bar{y} - \bar{a}\Vert \leq \delta$. Scaling up yields $\Vert y - \alpha\bar{a}\Vert \leq \delta$ for $\alpha\bar{a} \in \cA_{\infty} \cap B_1$, which proves~\eqref{eq3}.

\end{enumerate}
\end{proof}

Now we state an algorithm for solving a (bounded or unbounded) problem~\eqref{CP}. It uses Algorithm~\ref{alg_rec} as an initialization phase and combines it with the iterations of Algorithm~\ref{alg_bound}.

\begin{alg}{\label{alg_unbound}}
Algorithm for solving problem~\eqref{CP}.
\begin{description}
\item[Input] ~\\
A feasible set $\X$ and a matrix $A$; tolerances $\epsilon, \delta > 0$, point $v \in \relint A[\X]$. 

\item[Initialization] ~		
\begin{enumerate}
	\item Run Algorithm~\ref{alg_rec} with tolerance $\delta > 0$ and point $v \in \relint A[\X]$ to obtain a set of feasible points $\bar{X}$ and an outer approximation $\cA_0$ of $\cA$. 
	\item Set $H := \emptyset$ and $V^\epsilon := \emptyset$
\end{enumerate}

\item[Iteration] ~	
\begin{enumerate}\addtocounter{enumi}{2}
	\item While $H \neq \R^a$
	\begin{enumerate}
	\item Set $H := \R^a$
	\item For each $v \in (\text{vert } \mathcal{A}_0)\setminus V^\epsilon$
	\begin{itemize}
	\item Solve problems~\ref{P3} and~\ref{D3} to obtain optimal solutions $(x^v, z^v)$ and  $\lambda^v$. 
	\item Update $\bar{X} := \bar{X} \cup \{x^v\}$
	\item If $\Vert z^v \Vert > \epsilon$, then update $H :=  H \cap \{ y \in \R^a \; : \; \trans{(\lambda^v)} y \geq \trans{(\lambda^v)} Ax^v  \}$. \\ Else $V^\epsilon := V^\epsilon \cup \{v\}$.
	\end{itemize}
	\item Update $\mathcal{A}_0 := \mathcal{A}_0 \cap H$
	\end{enumerate}
\end{enumerate}

\item[Finalization] ~
\begin{enumerate}\addtocounter{enumi}{3}
	\item Compute $\mathcal{Y}_{out} := \text{vert } ((\mathcal{A}_0)_{\infty} \cap B_1  ) \setminus \{0\}$
\end{enumerate}	

\item[Output] ~\\
		Set of feasible points $\bar{X}$ and sets of directions $\cY_{in}, \cY_{out}$.\\
		Outer approximation $\cA_0$ of $\cA$.
\end{description}
\end{alg}

The following lemma and theorem prove the correctness of the algorithm.
\begin{lemma} \label{lemma_alg_unbound}
Assume that $v \in \relint A[\X]$. For Algorithm~\ref{alg_unbound} the following holds:
\begin{enumerate}
\item The set $\cA_0$ is a convex polyhedral superset of $\cA$ throughout the algorithm.
\item If problem~\eqref{CP} is bounded, then Algorithm~\ref{alg_unbound} returns the same sets $\bar{X}$ and $\cA_0$ as Algorithm~\ref{alg_bound}.
\item The recession cone $(\cA_0)_{\infty}$ is a finite $(0, \delta)$-outer approximation of the recession cone $\cA_{\infty}$ throughout the algorithm. 
\end{enumerate}
\end{lemma}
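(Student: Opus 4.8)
The plan is to treat the three bullets in order, leaning on the results already established for Algorithms~\ref{alg_bound} and~\ref{alg_rec}.

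For the first bullet I would argue exactly as in Lemma~\ref{lemma_alg_bound}. By the third bullet of Lemma~\ref{lemma_alg_rec}, after the Initialization phase (the call to Algorithm~\ref{alg_rec}) the set $\cA_0$ is already a convex polyhedral superset of $\cA$. In the Iteration phase, $\cA_0$ is updated only by intersecting with halfspaces of the form $\{ y \in \R^a : \trans{(\lambda^v)} y \geq \trans{(\lambda^v)} A x^v \}$ produced by the norm minimization scalarization~\eqref{P3} and its dual~\eqref{D3}. By Lemmas~\ref{lemma_P3_fb} and~\ref{lemma_P3} each such halfspace contains $\cA$, so the intersection remains a convex polyhedron containing $\cA$. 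This gives the claim at every step of the algorithm.

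For the second bullet the key observation is the last bullet of Lemma~\ref{lemma_alg_rec}: for a bounded problem every weighted sum scalarization~\eqref{P1} solved in Step 3 of Algorithm~\ref{alg_rec} is bounded, so Steps 4 and 5 of that algorithm perform no updates and Algorithm~\ref{alg_rec} terminates with $\cY_{in} = \cY_{out} = \emptyset$. Consequently, the only modifications to $\bar{X}$ and $\cA_0$ during Algorithm~\ref{alg_rec} come from the loop over $w = e^1, \dots, e^a, -e^1, \dots, -e^a$, which is precisely the Initialization (Steps 1 and 2) of Algorithm~\ref{alg_bound}. Hence after the Initialization phase of Algorithm~\ref{alg_unbound} the pair $(\bar{X}, \cA_0)$ coincides with the one produced by the Initialization of Algorithm~\ref{alg_bound}. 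Since the Iteration phase of Algorithm~\ref{alg_unbound} is identical to the Iteration (Step 3) of Algorithm~\ref{alg_bound}, both algorithms follow the same trajectory from the same initial data and therefore return the same $\bar{X}$ and $\cA_0$.

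The third bullet is the crux, and I would prove it by showing that the $(0,\delta)$-outer approximation property is preserved under the Iteration phase. The base case is supplied by Theorem~\ref{thm_rec}: right after the call to Algorithm~\ref{alg_rec} we have $\cA_{\infty} \subseteq (\cA_0)_{\infty}$ via~\eqref{eq1} and $d_H((\cA_0)_{\infty} \cap B_1(0), \cA_{\infty} \cap B_1(0)) \leq \delta$ via~\eqref{eq3}, so $(\cA_0)_{\infty}$ is a $(0,\delta)$-outer approximation of $\cA_{\infty}$ (it is finitely generated, being the recession cone of a polyhedron, and the vertex condition of Definition~\ref{def_outer_app} is vacuous, as the only vertex of the cone is the origin, which lies in $\cA_{\infty}$). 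Each Iteration step replaces $\cA_0$ by its intersection with a polyhedron $H$; since the recession cone of a polyhedral intersection is the intersection of the recession cones, the cone can only shrink, becoming $(\cA_0)_{\infty} \cap H_{\infty} \subseteq (\cA_0)_{\infty}$. By the first bullet, $\cA \subseteq \cA_0$ holds at every step, and monotonicity of the recession cone under inclusion yields $\cA_{\infty} \subseteq (\cA_0)_{\infty}$ throughout, so condition 2 of Definition~\ref{def_outer_app} is never lost.

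It remains to check that the Hausdorff condition~\eqref{eq3} survives the shrinking, and this is the one point requiring care. Here I would use that the containment $\cA_{\infty} \subseteq (\cA_0)_{\infty}$ makes the reverse one-sided term vanish, so $d_H((\cA_0)_{\infty} \cap B_1(0), \cA_{\infty} \cap B_1(0)) = \sup_{d \in (\cA_0)_{\infty} \cap B_1(0)} \inf_{c \in \cA_{\infty} \cap B_1(0)} \Vert d - c \Vert$. Because an Iteration step only replaces $(\cA_0)_{\infty}$ by a subset while keeping $\cA_{\infty}$ inside it, this supremum is taken over a smaller set and hence cannot increase; it therefore remains $\leq \delta$. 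Combining the preserved containment with the non-increasing one-sided Hausdorff term shows that $(\cA_0)_{\infty}$ stays a $(0,\delta)$-outer approximation of $\cA_{\infty}$ at every step, which is the assertion. The main obstacle is precisely this monotonicity argument, which is why I isolate the vanishing of the reverse term as the enabling observation.
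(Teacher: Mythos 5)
Your proof is correct and follows essentially the same route as the paper: the first two bullets are argued identically (via Lemmas~\ref{lemma_alg_rec} and~\ref{lemma_P3} and the coincidence of the initialization/iteration steps), and for the third bullet both arguments rest on the sandwich $\cA_{\infty} \subseteq (\cA_0^i)_{\infty} \subseteq (\cA_0)_{\infty}$ obtained from cutting with supporting halfspaces. The only difference is that you spell out why this sandwich preserves the Hausdorff bound (the reverse one-sided term vanishes and the forward supremum is over a shrinking set), a step the paper leaves implicit.
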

\begin{proof}
\begin{enumerate}
\item Algorithm~\ref{alg_rec} returns a convex polyhedral superset of $\cA$, see Lemma~\ref{lemma_alg_rec}. We only update $\cA_0$ through intersections with halfspaces containing the set $\cA$, see Lemma~\ref{lemma_P3}.
\item For a bounded problem, only Step 2 of Algorithm~\ref{alg_rec} is relevant (no iterations of Steps 4 and 5 are done). Since Step 2 of Algorithm~\ref{alg_rec} coincides with Step 2 of Algorithm~\ref{alg_bound}, the same outer approximation $\cA_0$ and the same set of feasible points $\bar{X}$ are obtained by both. Step 3 of Algorithm~\ref{alg_bound} and Step 3 of Algorithm~\ref{alg_unbound} also coincide, yielding the same sets $\bar{X}$ and $\cA_0$.
\item Within Algorithm~\ref{alg_rec} it holds $\cone \cY_{out} = (\cA_0)_{\infty}$. The claim holds at the end of Step 1 of Algorithm~\ref{alg_unbound} by Theorem~\ref{thm_rec}. Let us denote $\cA_0^1, \cA_0^2, \dots$ the new sets obtained throughout Algorithm~\ref{alg_unbound}. Since these sets are obtained via intersections with supporting halfspaces, it holds $\cA \subseteq \cA_0^i \subseteq \cA_0$ for each $i$. This implies $(\cA)_{\infty} \subseteq (\cA_0^i)_{\infty} \subseteq (\cA_0)_{\infty}$, which proves the claim.
\end{enumerate}
\end{proof}

\begin{theorem}
\label{thm_unbound_alg}
Assume that Algorithm~\ref{alg_unbound} terminated successfully for problem~\eqref{CP}. Then $(\bar{X}, \cY_{out})$ is a finite $(\epsilon, \delta)$-solution of~\eqref{CP}. 
\end{theorem}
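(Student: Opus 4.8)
The plan is to verify, for the pair $(\bar X, \cY_{out})$, the three defining properties of a finite $(\epsilon,\delta)$-solution in Definition~\ref{def_sol_unbound}, where $\cY_{out}$ is the set produced in the Finalization step (Step 4) from the terminal outer approximation $\cA_0$. The initial observation is that, by the construction in Step 4, $\cone \cY_{out} = (\cA_0)_{\infty}$ for the terminal $\cA_0$, so all three properties can be rephrased in terms of $\cA_0$ and its recession cone.

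Properties 1 and 2 I would read off from the structural results already established. Lemma~\ref{lemma_alg_unbound} asserts that $(\cA_0)_{\infty}$ stays a $(0,\delta)$-outer approximation of $\cA_{\infty}$ throughout Algorithm~\ref{alg_unbound}; unwinding Definition~\ref{def_outer_app} for this cone approximation gives exactly $\cA_{\infty} \subseteq (\cA_0)_{\infty} = \cone \cY_{out}$, which is property 2, and $d_H((\cA_0)_{\infty} \cap B_1(0), \cA_{\infty} \cap B_1(0)) \leq \delta$, which is property 1. The only point worth spelling out is why the $\delta$-bound survives the refinement done in Step 3: the iterations merely intersect $\cA_0$ with further supporting halfspaces of $\cA$, so the recession cones form a nested chain, all containing $\cA_{\infty}$, between the cone produced by Algorithm~\ref{alg_rec} and $\cA_{\infty}$ itself. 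Since the one-sided Hausdorff distance to $\cA_{\infty}$ is monotone along such a chain of supersets of $\cA_{\infty}$, the bound of Theorem~\ref{thm_rec} is inherited by the terminal cone.

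Property 3 is the main part and the place where the termination criterion enters. First I would use $\cA \subseteq \cA_0$ (Lemma~\ref{lemma_alg_unbound}) to reduce the goal to $\cA_0 \subseteq \conv A[\bar X] + \cone \cY_{out} + B_{\epsilon}$, and then invoke the vertex (Minkowski--Weyl) representation $\cA_0 = \conv(\text{vert }\cA_0) + (\cA_0)_{\infty}$. The while-loop in Step 3 exits only once $H = \R^a$, which means that at the final pass every vertex $v \in \text{vert }\cA_0$ produced a norm-minimization solution with $\Vert z^v \Vert \leq \epsilon$; since $v = Ax^v - z^v$ with $x^v \in \bar X$, this says $v \in Ax^v + B_{\epsilon} \subseteq \conv A[\bar X] + B_{\epsilon}$. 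Taking convex hulls and using that $\conv A[\bar X] + B_{\epsilon}$ is itself convex yields $\conv(\text{vert }\cA_0) \subseteq \conv A[\bar X] + B_{\epsilon}$, and adding $(\cA_0)_{\infty} = \cone \cY_{out}$ to both sides gives the desired inclusion, whence property 3.

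I expect the main obstacle to be two pieces of bookkeeping rather than any deep difficulty: ensuring that the $\cY_{out}$ used for properties 1--2 is attached to the terminal $\cA_0$ (so that $\cone \cY_{out} = (\cA_0)_{\infty}$ and the refinement-monotonicity argument is legitimate), and justifying the V-representation of $\cA_0$, namely that $\cA_0$ is line-free so that $\conv(\text{vert }\cA_0) + (\cA_0)_{\infty}$ is valid and the finitely many vertices examined at termination indeed generate all of $\conv(\text{vert }\cA_0)$. Both rest on the polyhedral structure of $\cA_0$ guaranteed by Lemmas~\ref{lemma_alg_rec} and~\ref{lemma_alg_unbound}.
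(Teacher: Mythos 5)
Your proposal is correct and follows essentially the same route as the paper's proof: verify the three properties of Definition~\ref{def_sol_unbound}, obtain properties 1 and 2 from Lemma~\ref{lemma_alg_unbound} (whose proof already contains your monotonicity-under-refinement observation), and obtain property 3 from the V-representation of $\cA_0$ together with the termination criterion $\Vert z^v\Vert \le \epsilon$ at every vertex. The extra bookkeeping you flag (that $\cone\cY_{out} = (\cA_0)_\infty$ for the terminal $\cA_0$, and the polyhedral structure justifying the vertex representation) is handled in the paper exactly as you anticipate.
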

\begin{proof}
Let's consider the sets $\bar{X}, \cY_{out}$ and $\cA_0$ upon termination of the algorithm.
According to Lemma~\ref{lemma_alg_unbound}, $\cone \cY_{out} = (\cA_0)_{\infty}$ is a finite $(0, \delta)$-outer approximation of the recession cone $\cA_{\infty}$. Therefore, properties 1 and 2 of Definition~\ref{def_sol_unbound} are satisfied. Lemma~\ref{lemma_alg_unbound} also provides that $\cA_0$ is a convex polyhedron. As such, it has a (non-redundant) V-representation $\cA_0 = \conv \{v^1, \dots, v^k\} + (\cA_0)_{\infty}$, where $k \in \N$. If $(\cA_0)_{\infty}$ contains no lines, then $v^1, \dots, v^k$ are vertices of $\cA_0$. Otherwise these are (non-redundant) points on the boundary of $\cA_0$.  The iterations of Step 3 of Algorithm~\ref{alg_unbound} terminate when for each point $v^i$ there exists $\bar{x}^i \in \bar{X}$ such that $\Vert v^i - A \bar{x}^i \Vert \leq \epsilon$. Therefore, it holds $\conv \{v^1, \dots, v^k\} \subseteq \conv A[\bar{X}] + B_{\epsilon}$. This provides the last property of the solution concept as
$$\cA \subseteq \cA_0 \subseteq \conv A[\bar{X}] + \cone \cY_{out} + B_{\epsilon}.$$
\end{proof}

Algorithm~\ref{alg_rec} provides us with a set of points $\bar{X}$ as well as two sets of directions, $\cY_{in}$ and $\cY_{out}$. It also outputs a polyhedron $\cA_0$. Theorems~\ref{thm_rec} and~\ref{thm_unbound_alg} provide the inclusions
\begin{align} 
\label{eq4}
\conv A[\bar{X}] + \cone \cY_{in} \subseteq \cA \subseteq \cA_0 \subseteq \conv A[\bar{X}] + \cone \cY_{out} + B_{\epsilon}. 
\end{align}
Let us first discuss the left-hand side inclusion. In the following, we will refer to $\conv A[\bar{X}] + \cone \cY_{in}$ as an inner approximation. Inclusion~\eqref{eq4} provides an intuition for this choice of terminology. Let us support this intuition a bit more. In case of a non-empty set $\cY_{in}$, equations~\eqref{eq1} and~\eqref{eq5} imply
\begin{enumerate}
\item $d_H (\cA_{\infty} \cap B_1, \cone \cY_{in} \cap B_1) \leq \delta$,
\item $\cA_{\infty} \supseteq \cone \cY_{in}$, and
\item $\conv A[\bar{X}] + \cone \cY_{in}  \subseteq \cA$.
\end{enumerate}
Note the close resemblance between these properties and those in Definitions~\ref{def_sol_unbound} and~\ref{def_outer_app}.

The set $\cY_{in}$ might, however, be empty for an unbounded problem~\eqref{CP}, see Example~\ref{Example2}. In such a case, the inner approximation $\conv A[\bar{X}] + \cone \cY_{in}$ is a bounded set despite $\cA$ being unbounded. Although such inner approximation is of limited quality, the problem is nevertheless solved in the sense of Definition~\ref{def_sol_unbound} -- as the diameter of the set $\cY_{out}$ is bounded by $\delta$.

Let us now discuss the right-hand side inclusions of~\eqref{eq4}. The polyhedrons $\cA_0$ and $\conv A[\bar{X}] + \cone \cY_{out} + B_{\epsilon}$ are both $(\epsilon, \delta)$-outer approximations of the image set $\cA$, where out of the two outer approximations $\cA_0$ is the better one.
Therefore, if an approximation of the set rather than a solution is of interest, one should chose (depending on the application) between the inner approximation $\conv A[\bar{X}] + \cone \cY_{in}$ and the outer approximation $\cA_0$.

\begin{remark}
\label{Rem:near}
Algorithms~\ref{alg_rec} and~\ref{alg_unbound} assume that the optimal solutions of the scalarizations are found. As we discussed in Remarks~\ref{remark_near1} and~\ref{remark_near2}, in practice only near-optimal solutions will be found. We again denote by $\varepsilon$ the accuracy to which these near-optimal solutions are found. It is reasonable to assume $\varepsilon \ll \min \{\epsilon, \delta\}$. Treating the found near-optimal solutions as optimal solutions (i.e.~omitting $\varepsilon$ within the relevant half-space) means that the half-space used to generate the outer approximation $\cA_0$ are slightly shifted, see Remark~\ref{remark_near1}. However, these half-spaces are not tilted, therefore the recession cone of $\cA_0$ is not affected and we still approximate $\cA_{\infty}$ correctly.
\end{remark}

\begin{remark}
\label{Rem:near}
Note that correctness of Algorithms~\ref{alg_rec} and \ref{alg_unbound} was proven under the assumption that they terminate. Finite termination remains, similarly to Algorithm~\ref{alg_bound}, an open question.
\end{remark}

\section{Examples}
\begin{example}
\label{Example1}
We compute approximations of an intersection of two ellipses projected onto a lower  dimensional space.
Specifcally, we approximate the two-dimensional set
\begin{align}
\label{ex1}
\left\lbrace 
(x_1, x_2) \quad \vert \quad \exists x_3 \in \mathbb{R}: \quad x_1^2 + \frac{(x_2 - 1)^2}{4} + x_3^2 \leq 1, \quad \frac{(x_1 - 1)^2}{4} + x_2^2 + \frac{(x_3 - 1)^2}{4} \leq 1
\right\rbrace
\end{align}
and the three-dimensional set
\begin{align}
\begin{split}
\label{ex2}
\vspace*{-2cm}
\left\lbrace 
(x_1, x_2, x_3) \quad \vert  \quad \exists x_4 \in \mathbb{R}: \quad x_1^2 + \frac{(x_2 - 1)^2}{4} + x_3^2 + \frac{(x_4 - 1)^2}{4} \leq 1, \right. \\
 \left. \quad \frac{(x_1 - 1)^2}{4} + x_2^2 + \frac{(x_3 - 1)^2}{4} + x_4^2 \leq 1
\right\rbrace.
\end{split}
\end{align}
These convex projection problems (or, more precisely, their shifted versions) were solved via the associated multi-objective problems in~\cite{KR21b}. Here we apply Algorithm~\ref{alg_bound}. We compare the original version of the algorithm with the alternative initialization described in Remark~\ref{remark_initialization}, both for a tolerance $\epsilon = 0.01$. For the set~\eqref{ex1}, the alternative initialization leads to fewer computations needed (60 optimizations and 6 evaluations of a polyhedron for the original initialization versus 54 optimizations and 5 evaluations of a polyhedron for the alternative). However, for the set~\eqref{ex2} the opposite is the case (1544 optimizations and 7 evaluations of a polyhedron for the original version and 1570 optimizations and 8 evaluations of a polyhedron for the alternative). Figure~\ref{fig:1} visualizes the obtained approximations.

\begin{figure}[h]
\centering
\caption{\label{fig:1} Approximations of~\eqref{ex1} and~\eqref{ex2} from Example~\ref{Example1} for tolerance $\epsilon = 0.01$.}
\begin{subfigure}[b]{0.45\textwidth}
\centering
\includegraphics[width = \textwidth]{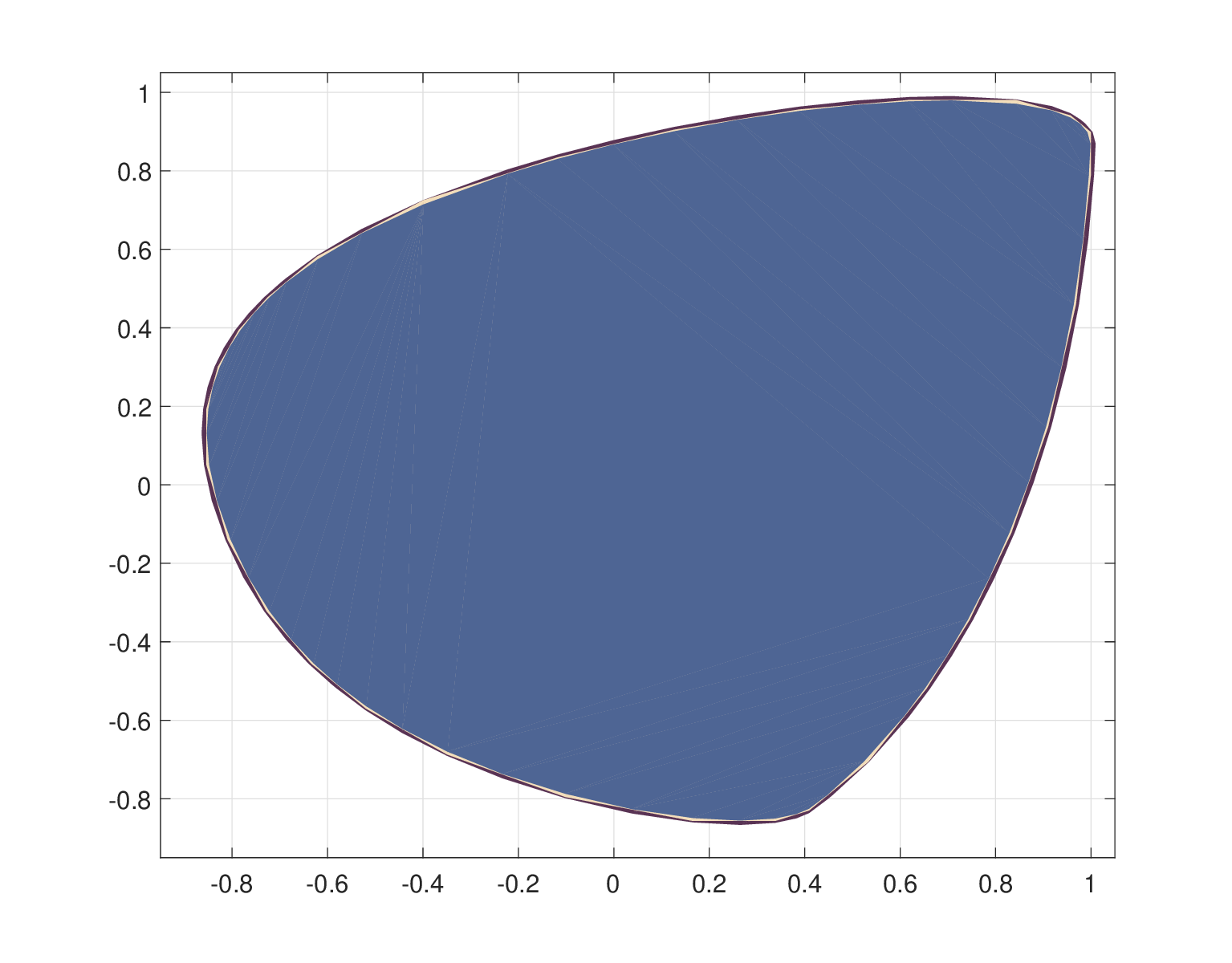}
\caption{\label{fig:1-1}Approximations of~\eqref{ex1}. At this scale the output of different initializations are not visually distinguishable. }
\end{subfigure}
~
\begin{subfigure}[b]{0.45\textwidth}
\centering
\includegraphics[width = \textwidth]{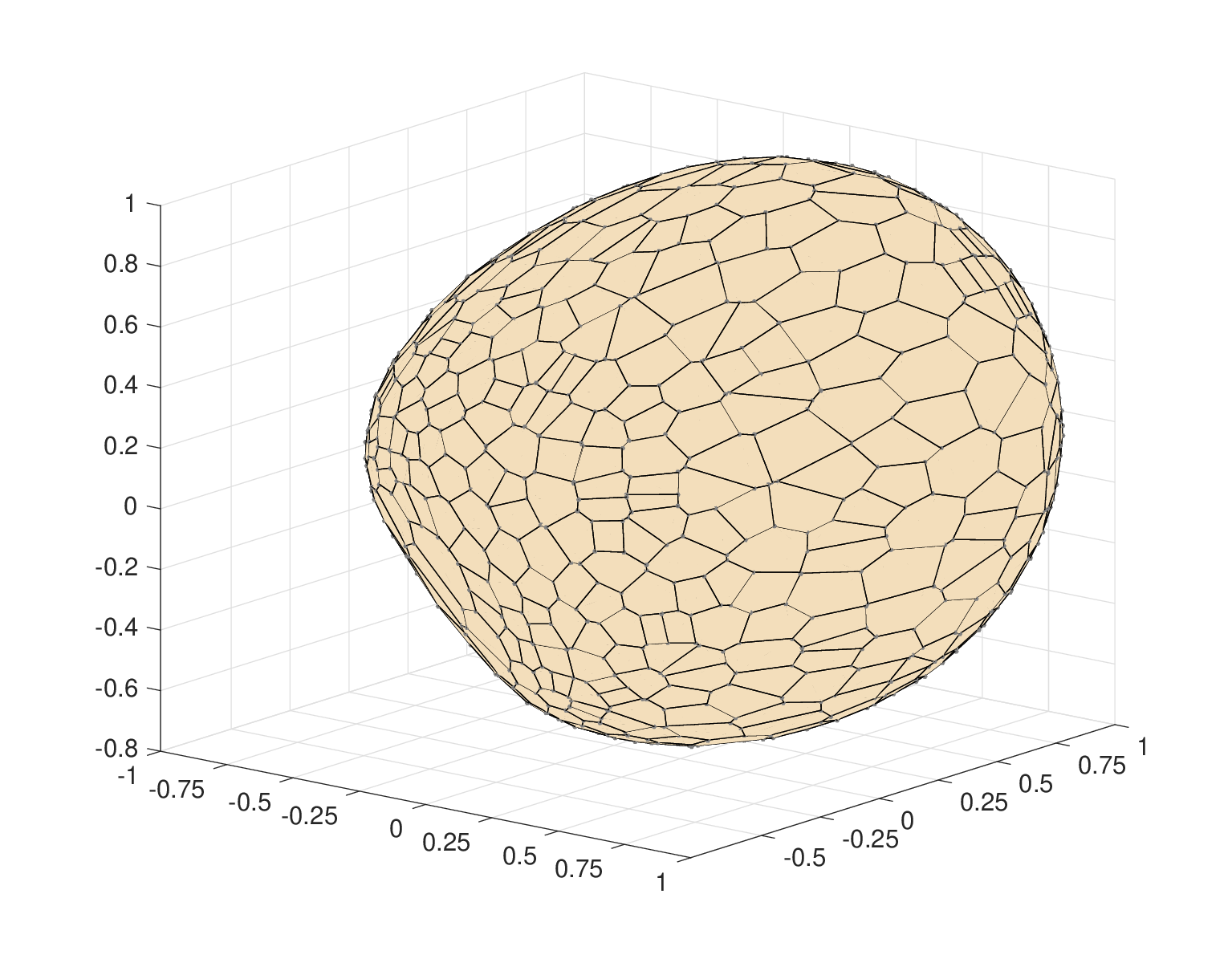}
\caption{\label{fig:1-2}Outer approximation $\cA_0$ of~\eqref{ex2}. }
\end{subfigure}
\end{figure}

\begin{figure}[h]
\centering
\caption{\label{fig:2} Zoomed-in approximations of~\eqref{ex1} from Example~\ref{Example1} for tolerance $\epsilon = 0.01$: Inner approximation $\conv A[\bar{X}]$ in blue, outer approximation $\cA_0$ in yellow and outer approximation $\conv A[\bar{X}] + B_{\epsilon}$ in purple.}
\begin{subfigure}[b]{0.45\textwidth}
\centering
\includegraphics[width = \textwidth]{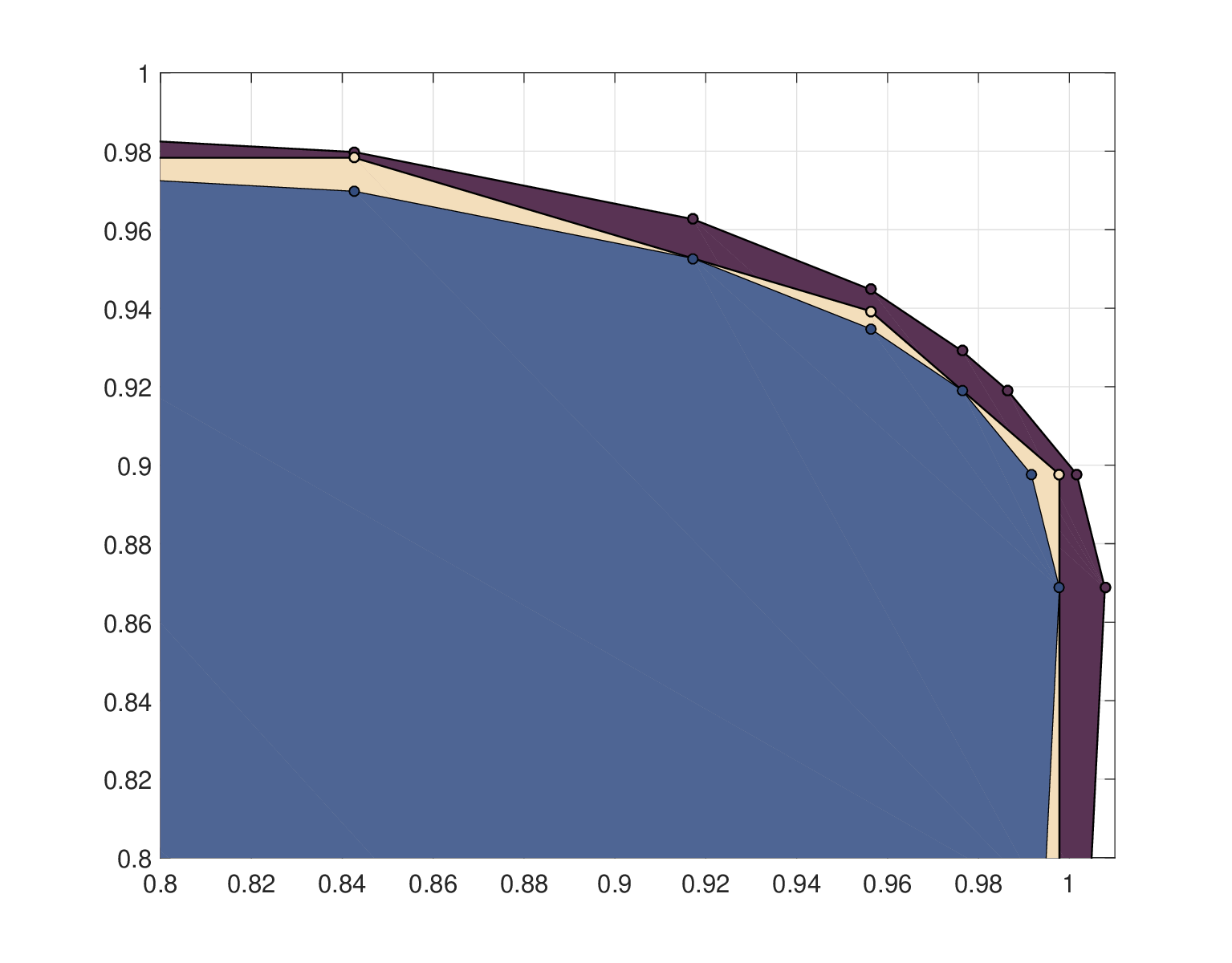}
\caption{Original initialization of Algorithm~\ref{alg_bound}.}
\end{subfigure}
~
\begin{subfigure}[b]{0.45\textwidth}
\centering
\includegraphics[width = \textwidth]{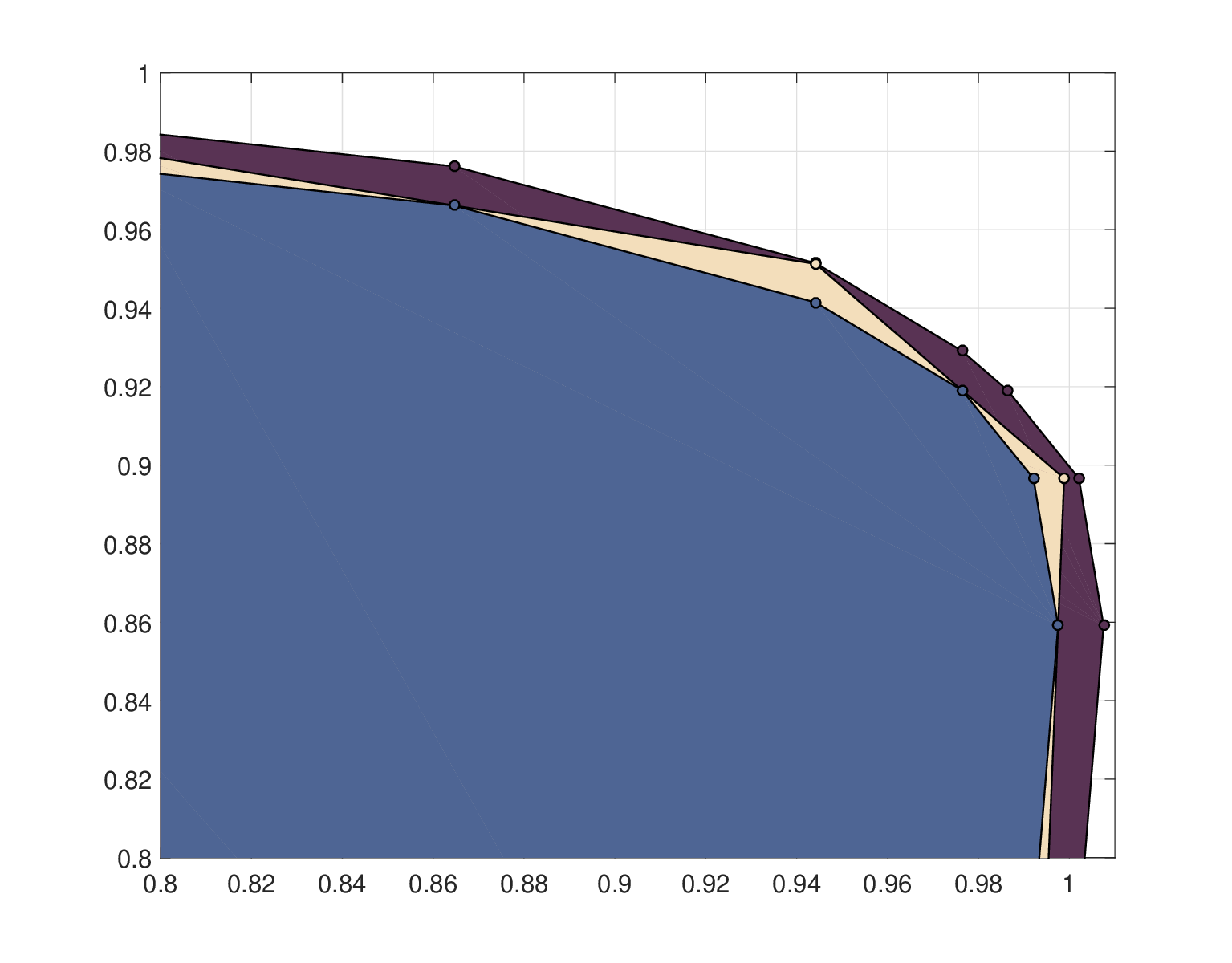}
\caption{Alternative initialization of Algorithm~\ref{alg_bound}.}
\end{subfigure}
\end{figure}

Three approximations for each set are obtained: the inner approximation $\conv A[\bar{X}]$, the outer approximation $\cA_0$ and the outer approximation $\conv A[\bar{X}] + B_{\epsilon}$, see also the discussion around Equation~\eqref{eq4}. All three approximations are depicted in Figure~\ref{fig:1-1}, but they are not visually distinguishable. For the sake of visualization, Figure~\ref{fig:2} contains zoomed-in versions. There we can also observe that the outputs of the algorithm with different initializations differ.
\end{example}

\begin{example}
\label{Example2}
Now we provide an illustrative example of an unbounded problem: We approximate the set 
\begin{align}
\label{ex3}
\left\lbrace 
(x_1, x_2) \quad : \quad (x_1 \cos \theta + x_2 \sin \theta)^2 \leq (x_1 \sin \theta + x_2 \cos \theta)
\right\rbrace
\end{align}
for $\theta = 0$ and $\theta = \frac{\pi}{6}$. These are the quadratic epigraph and its rotated version. We provide Algorithm~\ref{alg_unbound} with a known element of the set as an input. Note that while the theoretical results were derived only for input points from the relative interior of $A[\X]$, one can numerically also experiment with boundary input points. In this example, we input the point $v = (0,2)$ for $\theta = 0$ and $ v = (\cos\theta-\sin\theta, \sin\theta+\cos\theta)$ for $\theta = \frac{\pi}{6}$. For both cases we run the algorithm for tolerance parameters set to $\delta = 0.1$ and $\epsilon = 0.01$.

The obtained approximations for the case of $\theta = 0$ are illustrated in Figure~\ref{fig:3-1}. The algorithm  computes a solution consisting of
\begin{align*}
\cY_{in} = 
\left\lbrace 
\begin{pmatrix}
0 \\ 1
\end{pmatrix}
\right\rbrace , \quad
\cY_{out} = 
\left\lbrace 
\begin{pmatrix}
-0.0327 \\ 0.9673
\end{pmatrix}, 
\begin{pmatrix}
0 \\ 1
\end{pmatrix},
\begin{pmatrix}
0.0327 \\ 0.9673
\end{pmatrix}
\right\rbrace
\end{align*}
as well as $149$ elements of $\bar{X}$. In this case an element of $\cY_{in}$ is found in the first run of Step 5 of Algorithm~\ref{alg_rec}, since the recession direction coincides with a vertex of a unit ball.

This is not the case for the set rotated by $\theta = \frac{\pi}{6}$. No recession direction is found and Step 5  of Algorithm~\ref{alg_rec} terminates when the diameter of $\cY_{out}$ becomes sufficiently small. The algorithm provides a solution of the problem consisting of
\begin{align*}
\cY_{in} = \emptyset, \quad
\cY_{out} = 
\left\lbrace 
\begin{pmatrix}
0.3789 \\ 0.6211
\end{pmatrix}, 
\begin{pmatrix}
0.3459 \\ 0.6541
\end{pmatrix}
\right\rbrace
\end{align*}
and $129$ elements of $\bar{X}$. Note that this means that the inner approximation is bounded despite the set (and the problem) being unbounded. Approximations are displayed in Figure~\ref{fig:3-2}.

\begin{figure}[h]
\centering
\caption{\label{fig:3} Approximations of~\eqref{ex3} from Example~\ref{Example2}: Inner approximation $\conv A[\bar{X}] + \cone \cY_{in}$ in blue and outer approximation $\cA_0$ in yellow. The outer approximation $\conv A[\bar{X}]  + \cone \cY_{out} + B_{\epsilon}$ is not visually distinguishable from $\cA_0$. }
\begin{subfigure}[b]{0.45\textwidth}
\centering
\includegraphics[width = \textwidth]{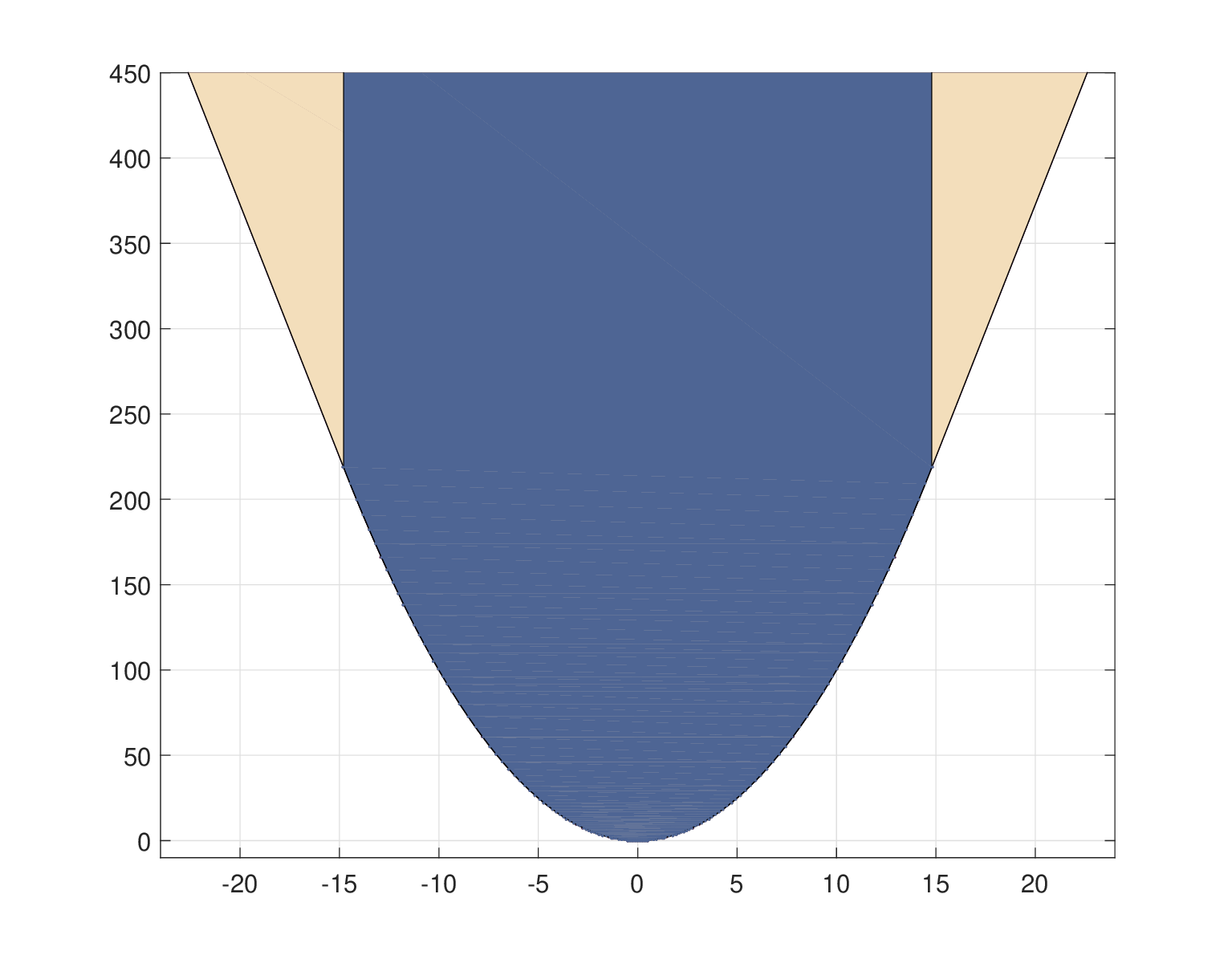}
\caption{\label{fig:3-1} Approximations of the set~\eqref{ex3} for $\theta = 0$. The algorithm was provided with an inner point of the set and it performed 153 optimizations and 13 evaluations of a polyhedron.}
\end{subfigure}
~
\begin{subfigure}[b]{0.45\textwidth}
\centering
\includegraphics[width = \textwidth]{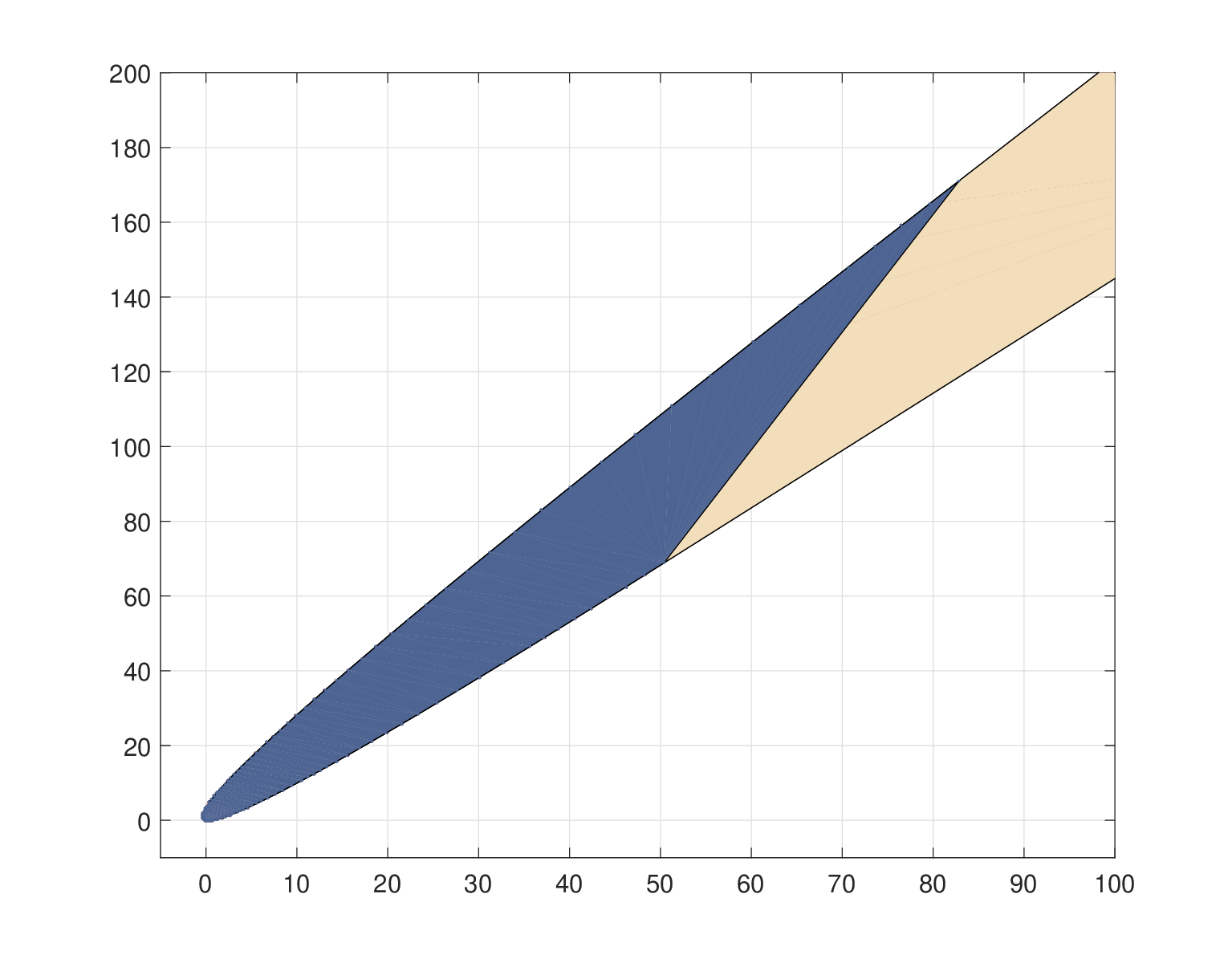}
\caption{\label{fig:3-2} Approximations of the set~\eqref{ex3} for $\theta = \frac{\pi}{6}$. The algorithm was provided with a boundary point of the set and it performed 130 optimizations and 11 evaluations of a polyhedron.}
\end{subfigure}
\end{figure}
\end{example}

\begin{example}
\label{Example3}
We also provide an example of a set with a lineality space. We approximate the unbounded 'tube-like' set
\begin{align}
\label{ex4}
\left\lbrace (x_1, x_2, x_3) \quad : \quad x_1^2 + (x_2 \cos \theta - x_3 \sin \theta)^2 \leq 1\right\rbrace
\end{align}
for $\theta = \frac{\pi}{3}$ via Algorithm~\ref{alg_unbound} for tolerances $\delta = 0.1$ and $\epsilon = 0.01$. In this case we generate the point $v \in A[\X]$ through a feasibility check. Since the three approximations provided by the algorithm are not well visually distinguishable, we only display one of them in Figure~\ref{fig:4-1}.
\end{example}

\begin{example}
\label{Example4}
We revisit the example of approximating the ice cream cone
\begin{align}
\label{ex5}
\left\lbrace (x_1, x_2, x_3) \quad : \quad \sqrt{x_1^2 + x_2^2} \leq x_3 \right\rbrace,
\end{align}
which was solved in~\cite{WURKH22} via a convex vector optimization problem. This time we treat the problem as an instance of~\eqref{CP} and solve it via Algorithm~\ref{alg_unbound} for the same tolerance $\delta = 0.2$ as in~\cite{WURKH22}, which also allows to visually distinguish the three approximations. The approximations are displayed in Figure~\ref{fig:4-2}. 
Note that while the inner approximation $\conv A[\bar{X}] + \cone \cY_{in}$ (in blue) and the outer approximation $\mathcal{A}_0$ (in yellow) are in this case cones, the second outer approximation $\conv A[\bar{X}] + \cone \cY_{out} + B_{\epsilon}$ (in purple) is a shifted cone that does not start in the origin because of the presence of the (polyhedral) ball $B_\epsilon$.
\end{example}

\begin{figure}[h]
\centering
\caption{\label{fig:4} Approximations of~\eqref{ex4} and~\eqref{ex5} from Examples~\ref{Example3} and~\ref{Example4}.}
\begin{subfigure}[b]{0.45\textwidth}
\centering
\includegraphics[width = \textwidth]{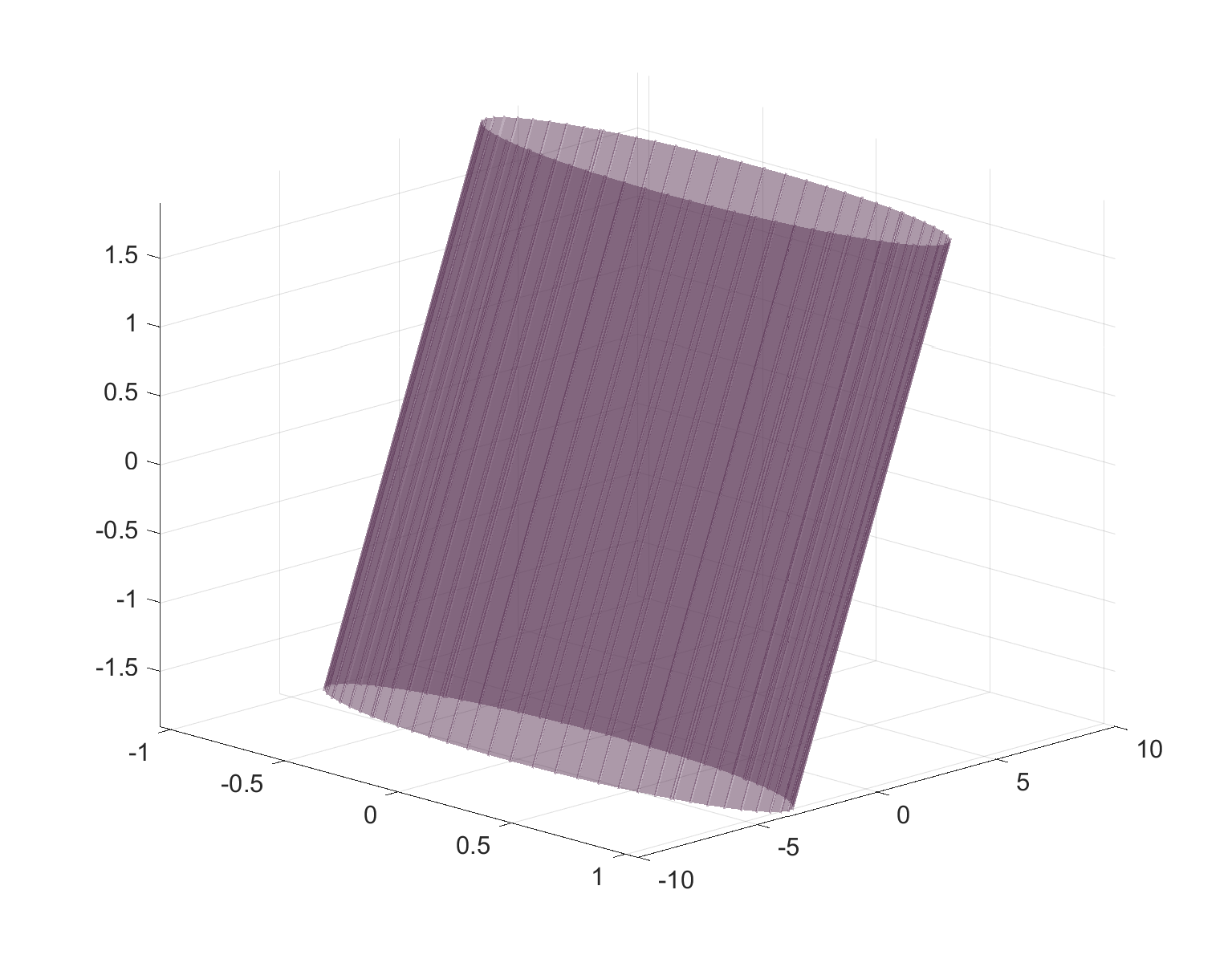}
\caption{\label{fig:4-1} Approximation of~\eqref{ex4}. The algorithm performed 71 optimizations and 7 evaluations of a polyhedron. }
\end{subfigure}
~
\begin{subfigure}[b]{0.45\textwidth}
\centering
\includegraphics[width = \textwidth]{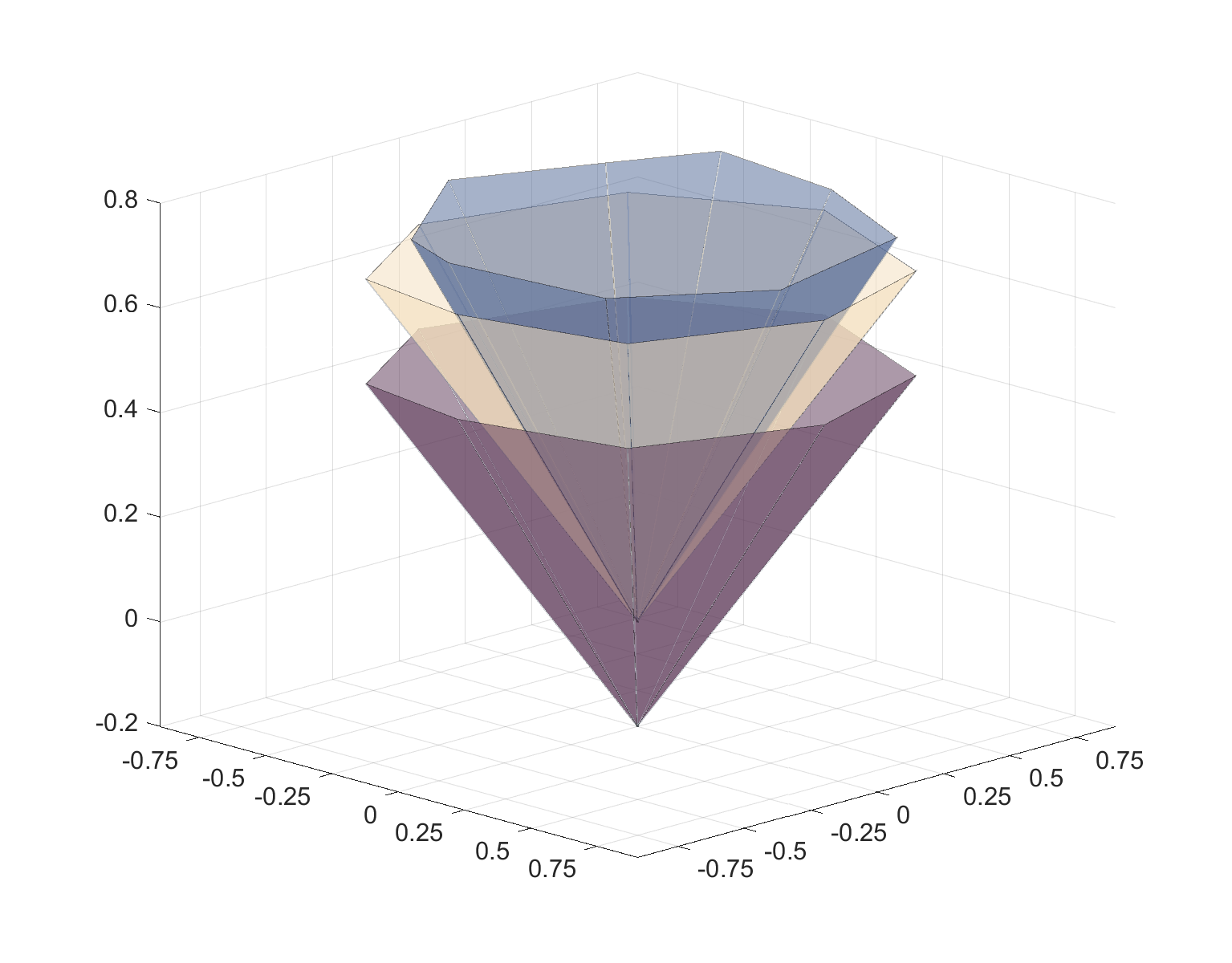}
\caption{\label{fig:4-2} Approximations of~\eqref{ex5}. The algorithm performed 31 optimizations and 8 evaluations of a polyhedron.}
\end{subfigure}
\end{figure}

\section{Conclusions}
The existing literature provides algorithms and methods for the bounded case of a convex projection problem. These are based on (explicitly or implicitly) transforming the convex projection into an associated multi-objective convex problem with one additional dimension of an image space.
 Consequently, the existing methods perform operations  in the higher-dimensional image space of the multi-objective problem.

The first contribution of this paper is Algorithm~\ref{alg_bound}, which solves a bounded convex projection directly, without a transformation to associated multi-objective problem. Algorithm~\ref{alg_bound} is a Benson-type, outer approximation algorithm performing operations in the (lower dimensional) image space of the projection and is inspired by \cite[Algorithm 1]{AUU22}. 
Just like for the outer approximation \cite[Algorithm 1]{AUU22} or \cite[Algorithm 1]{LRU14}, we provide correctness result but finite termination remains an open question.

Algorithm~\ref{alg_bound} serves as a stepping stone to the main result of this paper, a method for solving unbounded convex projection. We introduce an appropriate  solution concept and develop an algorithm computing such solution. Algorithm~\ref{alg_rec} computes  (inner and outer) approximations of a recession cone of a convex projection; it builds on the ideas of \cite[Algorithm 1]{WURKH22} for convex vector optimization problems. Finally, Algorithm~\ref{alg_unbound} combines the ideas of Algorithm~\ref{alg_bound} and Algorithm~\ref{alg_rec} to compute a solution of a convex projection problem. While correctness is proven, finite termination of these (or modified) algorithms remains a question for future research.


\begin{thebibliography}{WUR{\etalchar{+}}23}

\bibitem[AUU22]{AUU22}
\c{C}a\u{g}{\i}n Ararat, Firdevs Ulus, and Muhammad Umer.
\newblock A norm minimization-based convex vector optimization algorithm.
\newblock {\em Journal of Optimization Theory and Applications},
  194(2):681--712, 2022.

\bibitem[Bro08]{B08}
E.M. Bronstein.
\newblock Approximation of convex sets by polytopes.
\newblock {\em Journal of Mathematical Sciences}, 153(6), 2008.

\bibitem[BV04]{BV04}
Stephen Boyd and Lieven Vandenberghe.
\newblock {\em Convex Optimization}.
\newblock {Cambridge University Press}, March 2004.

\bibitem[DL24]{DL23}
Daniel D{\"o}rfler and Andreas L{\"o}hne.
\newblock Polyhedral approximation of spectrahedral shadows via homogenization.
\newblock {\em Journal of Optimization Theory and Applications}, 200:874--890,
  2024.

\bibitem[D{\"o}r22]{D22}
Daniel D{\"o}rfler.
\newblock On the approximation of unbounded convex sets by polyhedra.
\newblock {\em Journal of Optimization Theory and Applications}, 194:265--287,
  2022.

\bibitem[FR17]{FR17}
Zachary {Feinstein} and Birgit {Rudloff}.
\newblock {A recursive algorithm for multivariate risk measures and a
  set-valued Bellman's principle}.
\newblock {\em Journal of Global Optimization}, 68(1):47--69, 2017.

\bibitem[FRZ22]{FRZ21}
Zachary {Feinstein}, Birgit {Rudloff}, and Jianfeng {Zhang}.
\newblock {Dynamic Set Values for Nonzero Sum Games with Multiple
  Equilibriums}.
\newblock {\em Mathematics of Operations Research}, 47(1):616--642, 2022.

\bibitem[Kam92]{K92}
G.~K. Kamenev.
\newblock A class of adaptive algorithms for approximating convex bodies by
  polyhedra.
\newblock {\em Computational Mathematics and Mathematical Physics},
  32(1):114--127, 1992.

\bibitem[KR21]{KR21}
G.~{Kov{\'a}{\v{c}}ov{\'a}} and B.~{Rudloff}.
\newblock Time consistency of the mean-risk problem.
\newblock {\em Operations research}, 69(4):1100--1117, 2021.

\bibitem[KR22]{KR21b}
G.~{Kov{\'a}{\v{c}}ov{\'a}} and B.~{Rudloff}.
\newblock Convex projection and convex multi-objective optimization.
\newblock {\em Journal of Global Optimization}, 83:301--327, 2022.

\bibitem[KRC22]{KRC20}
Gabriela {Kov{\'a}{\v{c}}ov{\'a}}, Birgit {Rudloff}, and Igor {Cialenco}.
\newblock {Acceptability maximization}.
\newblock {\em Frontiers of Mathematical Finance}, 1(2):219--248, 2022.

\bibitem[LRU14]{LRU14}
Andreas L{\"o}hne, Birgit Rudloff, and Firdevs Ulus.
\newblock Primal and dual approximation algorithms for convex vector
  optimization problems.
\newblock {\em Journal of Global Optimization}, 60(4):713--736, 2014.

\bibitem[LW16]{LW16}
Andreas L{\"o}hne and Benjamin Wei{\ss}ing.
\newblock Equivalence between polyhedral projection, multiple objective linear
  programming and vector linear programming.
\newblock {\em Mathematical Methods of Operations Research}, 84(2):411--426,
  2016.

\bibitem[LZS21]{LZS21}
Andreas L{\"o}hne, Fangyuan Zhao, and Lizhen Shao.
\newblock On the approximation error for approximating convex bodies using
  multiobjective optimization.
\newblock {\em Applied Set-Valued Analysis and Optimization}, 3:341--354, 2021.

\bibitem[NR95]{NR95}
Peter Ney and Stephen Robinson.
\newblock Polyhedral approximation of convex sets with an application to large
  deviation probability theory.
\newblock {\em Journal of Convex Analysis}, 2(1):229--240, 1995.

\bibitem[SZC18]{SZC18}
Lizhen Shao, Fangyuan Zhao, and Yuhao Cong.
\newblock Approximation of convex bodies by multiple objective optimization and
  an application in reachable sets.
\newblock {\em Optimization}, 67(6):783--796, 2018.

\bibitem[Ulu18]{U18}
Firdevs Ulus.
\newblock Tractability of convex vector optimization problems in the sense of
  polyhedral approximations.
\newblock {\em Journal of Global Optimization}, 72(4):731--742, 2018.

\bibitem[WUR{\etalchar{+}}23]{WURKH22}
Andrea Wagner, Firdevs Ulus, Birgit Rudloff, Gabriela Kov{\'a}{\v{c}}ov{\'a},
  and Niklas Hey.
\newblock Algorithms to solve unbounded convex vector optimization problems.
\newblock {\em SIAM Journal on Optimization}, 33(4):2598--2624, 2023.

\end{thebibliography}

\newcommand{\etalchar}[1]{$^{#1}$}

\end{document}